\documentclass[11pt]{amsart}
\usepackage[margin=1.5in]{geometry} 
\usepackage{amsmath,amssymb}
\usepackage{enumerate}
\usepackage{color}
\usepackage{graphicx}
\usepackage{dsfont}

\newtheorem{theorem}{Theorem}[section]
\newtheorem{proposition}[theorem]{Proposition}
\newtheorem{lemma}[theorem]{Lemma}
\newtheorem{corollary}[theorem]{Corollary}
\theoremstyle{definition}

\newtheorem{remark}[theorem]{Remark}
\newtheorem{example}[theorem]{Example}

\newcommand{\R}{\mathbb{R}}

\newcommand{\N}{\mathbb{N}}

\renewcommand{\epsilon}{\varepsilon}

\newcommand{\X}{\mathcal{X}}
\newcommand{\Y}{\mathcal{Y}}
\newcommand{\eps}{\varepsilon}
\DeclareMathOperator{\spt}{spt}
\newcommand{\1}{\mathbf{1}}
\DeclareMathOperator*{\argmin}{arg\, min}
\DeclareMathOperator*{\essinf}{ess\, inf}
\DeclareMathOperator*{\esssup}{ess\, sup}

\numberwithin{equation}{section}
\usepackage[pdfborder={0 0 0}]{hyperref}
\hypersetup{
  urlcolor = black,
  pdfauthor = {Marcel Nutz, Johannes Wiesel},
  pdfkeywords = {Optimal Transport; Entropic Regularization; Schrodinger potentials; Kantorovich potentials},
  pdftitle = {Entropic Optimal Transport: Convergence of Potentials},
  pdfsubject = {Entropic Optimal Transport: Convergence of Potentials},
  pdfpagemode = UseNone
}

\begin{document}

\author{Marcel Nutz}
\thanks{The authors thank Guillaume Carlier, Giovanni Conforti, Soumik Pal and Luca Tamanini for helpful discussions.}
\thanks{MN acknowledges support by an Alfred P.\ Sloan Fellowship and NSF Grants DMS-1812661,  DMS-2106056.}
\address[MN]{Departments of Statistics and Mathematics, Columbia University,  1255 Amsterdam Avenue, New York, NY 10027, USA}
\email{mnutz@columbia.edu}

\author{Johannes Wiesel}
\address[JW]{Department of Statistics, Columbia University, 1255 Amsterdam Avenue,
New York, NY 10027, USA}
\email{johannes.wiesel@columbia.edu}

\title[Entropic Optimal Transport]{Entropic Optimal Transport:\\Convergence of Potentials}

\date{\today}

\begin{abstract}
  We study the potential functions that determine the optimal density for $\varepsilon$-entropically  regularized optimal transport, the so-called Schr\"odinger potentials, and their convergence to the counterparts in classical optimal transport, the Kantorovich potentials. In the limit $\varepsilon\to0$ of vanishing regularization, strong compactness holds in $L^{1}$ and cluster points are Kantorovich potentials. In particular, the Schr\"odinger potentials converge in $L^{1}$ to the Kantorovich potentials as soon as the latter are unique. These results are proved for all continuous, integrable cost functions on Polish spaces. In the language of Schr\"odinger bridges, the limit corresponds to the small-noise regime. %
\end{abstract}

\keywords{Optimal Transport; Entropic Regularization; Schr\"odinger potentials}
\subjclass[2010]{90C25; 49N05}

\maketitle

\section{Introduction and Main Result}

Let $(\X,\mu)$ and $(\Y,\nu)$ be Polish probability spaces and $\Pi(\mu,\nu)$ the set of all couplings; i.e., probability measures~$\pi$ on~$\X\times\Y$ with first marginal~$\mu$ and second marginal~$\nu$. Moreover, let $c:\X\times\Y\to\R_{+}$ be continuous with 
\begin{equation}\label{eq:cIntegrable}
 \int c(x,y)\,\mu(dx)\nu(dy)<\infty.
\end{equation}
Given a constant~$\eps>0$, the entropic optimal transport (EOT) problem is
\begin{equation}\label{eq:EOT}
  I_{\eps} :=\inf_{\pi\in\Pi(\mu,\nu)} \int_{\X\times\Y} c(x,y) \, \pi(dx,dy) + \eps H(\pi|\mu\otimes\nu), 
\end{equation}
where $H(\,\cdot\,|\mu\otimes\nu)$ denotes relative entropy with respect to the product measure,
$$
  H(\pi|\mu\otimes\nu):=\begin{cases}
\int \log (\frac{d\pi}{d(\mu\otimes\nu)}) \,d\pi, & \pi\ll \mu\otimes\nu,\\
\infty, & \pi\not\ll \mu\otimes\nu.
\end{cases} 
$$
For $\eps=0$ we recover the Monge--Kantorovich optimal transport problem, and~\eqref{eq:EOT} can be seen as its entropic regularization with  parameter~$\eps>0$. The minimization~\eqref{eq:EOT} admits a unique solution $\pi_{\eps}\in\Pi(\mu,\nu)$; moreover, $\pi_{\eps}\sim \mu\otimes\nu$ and its density is of the form
\begin{equation}\label{eq:densityFormIntro}
\frac{d\pi_{\eps}}{d(\mu\otimes\nu)}(x,y) = \exp \left(\frac{f_{\eps}(x) +g_{\eps}(y) - c(x,y)}{\eps}\right)
\end{equation}
for two measurable functions $f_{\eps}: \X\to\R$  and $g_{\eps}: \Y\to\R$. We call these functions the \emph{Schr\"odinger potentials}. They are unique up to normalization: any constant can be added to $f_{\eps}$ and subtracted from $g_{\eps}$. The integrability~\eqref{eq:cIntegrable} of~$c$ implies that $f_{\eps}\in L^{1}(\mu)$ and $g_{\eps}\in L^{1}(\nu)$, and we enforce the symmetric normalization
\begin{equation}\label{eq:normalizationIntro}
\int f_\epsilon(x)\,\mu(dx)=\int g_\epsilon(y)\,\nu(dy)
\end{equation}
to have uniqueness of the potentials in all that follows. We mention that~$\pi_{\eps}$ can be characterized as the unique coupling $\pi\in\Pi(\mu,\nu)$ whose density is of the form~\eqref{eq:densityFormIntro}. See, for instance,
~\cite[Statements~3.6, 3.15, 3.19, 3.38]{FollmerGantert.97} for existence and uniqueness,  
or~\cite{Nutz.20} for a simple derivation including integrability under~\eqref{eq:cIntegrable}. 
These result heavily build on~\cite{BorweinLewis.92, BorweinLewisNussbaum.94, Csiszar.75, RuschendorfThomsen.97}, among others.
Rewriting the minimization~\eqref{eq:EOT}, the coupling $\pi_{\eps}$ can be interpreted as the so-called static Schr\"odinger bridge 
\begin{equation}\label{eq:bridgeIntro}
   \pi_{\eps}=\argmin_{\pi\in \Pi(\mu,\nu)} H(\pi|R)
\end{equation}
for the reference probability $dR\propto e^{-c/\eps}d(\mu\otimes\nu)$ which elucidates~\eqref{eq:densityFormIntro} as the factorization property~$\frac{d\pi_{\eps}}{dR}(x,y)= e^{f_{\eps}(x)/\eps}e^{g_{\eps}(y)/\eps}=:F(x)G(y)$.\footnote{We mention that \cite{Leonard.14} uses the term Schr\"odinger potentials for $f_{\eps}/\eps,g_{\eps}/\eps$  in the Schr\"odinger bridge context, as is natural when no parameter~$\eps$ is present. On the other hand, calling $f_{\eps},g_{\eps}$ potentials is more convenient in our setting, well motivated by the connection with Kantorovich potentials in Theorem~\ref{thm:1}, and consistent with the terminology in~\cite{GigliTamanini.21}.} 
A closely related, more analytic way to characterize the potentials are the Schr\"odinger equations. Writing also $C(x,y)=e^{-c(x,y)/\eps}$, the fact that $\pi_{\eps}$ of~\eqref{eq:densityFormIntro} is in $\Pi(\mu,\nu)$ implies that $(F,G)$ solves the coupled equations
\begin{equation}\label{eq:SchrodingerEqnsIntro}
  F(x)^{-1}= \int G(y) C(x,y)\, \nu(dy) \quad \mu\mbox{-a.s.}, \quad G(x)^{-1}= \int F(y) C(x,y)\, \nu(dx) \quad \nu\mbox{-a.s.}
\end{equation}
Conversely, we can use any solution $(F,G)$ to define %
a coupling with density of the form~\eqref{eq:normalizationIntro}. This coupling must coincide with $\pi_{\eps}$ by the aforementioned uniqueness, and then $(\eps\log F,\eps\log G)$ must be our Schr\"odinger potentials $(f_{\eps},g_{\eps})$, up to normalization. We refer to~\cite{Beurling.60,RuschendorfThomsen.93} and the references therein for more on Schr\"odinger equations, and to~\cite{Follmer.88,Leonard.14} for extensive surveys on Schr\"odinger bridges.

Yet another way to introduce the potentials is to consider the dual problem of~\eqref{eq:EOT} in the sense of convex analysis,
\begin{align}
\begin{split}\label{eq:dualEOT}
S_\epsilon
:=\sup_{f\in L^1(\mu), g \in L^1(\nu)} \bigg(\int f(x)\, \mu(dx)&+\int g(y) \,\nu(dy) \\
&-\epsilon \int e^{\frac{f(x)+g(y)-c(x,y)}{\epsilon}}\, \mu(dx)\nu(dy)+\epsilon\bigg).
\end{split}
\end{align}
Then $(f_{\eps},g_{\eps})$ is the unique solution of~\eqref{eq:dualEOT} with the normalization~\eqref{eq:normalizationIntro}. Indeed, direct arguments show the weak duality $S_{\eps}\leq I_{\eps}$. To see that equality is attained by~$(f_{\eps},g_{\eps})$ and~$\pi_{\eps}$, we plug in~\eqref{eq:densityFormIntro} and use $\pi_{\eps}(\X\times\Y)=1$ to find that $S_{\eps}\geq \int f_\epsilon(x)\,\mu(dx)+\int g_\epsilon(y)\,\nu(dy)\geq I_{\eps}$. Uniqueness holds by strict concavity. See \cite{PennanenPerkkio.19} and the references therein for a convex analysis perspective including~\eqref{eq:dualEOT}.

We are interested in the relation of $(f_{\eps}, g_{\eps})$ to solutions of the dual Monge--Kantorovich problem,
\begin{align}
\label{eq:dualOT}
S_0 :=\sup_{f\in L^1(\mu),\, g \in L^1(\nu),\, f\oplus g\leq c} \bigg(\int f(x)\, \mu(dx) + \int g(y) \,\nu(dy) \bigg),
\end{align}
where $(f\oplus g)(x,y):= f(x)+g(y)$.
It is well known that $S_{0}=I_{0}$ and that a solution $(f_{0},g_{0})$ exists \cite[Theorem~5.10, Remark~5.14]{Villani.09}. (In fact, Theorem~\ref{thm:1}
 below yields another proof as a by-product.) There is the same ambiguity as above, and to streamline terminology, we call $(f_{0},g_{0})$ Kantorovich potentials if they satisfy the normalization~\eqref{eq:normalizationIntro} for $\eps=0$.
As~\eqref{eq:dualOT} lacks the strict convexity of~\eqref{eq:dualEOT}, multiple Kantorovich potentials may exist even after normalization, for instance when both marginals are discrete. 
Nevertheless, uniqueness of Kantorovich potentials is known to hold for most problems of  interest to us, especially when~$c$ is differentiable and at least one marginal support is connected. See for instance \cite[Appendix~B]{BerntonGhosalNutz.21} for sufficient conditions.

Much of the enormous recent interest in entropic optimal transport stems from the success of Sinkhorn's algorithm in high-dimensional problems, enabling data-rich applications in areas like machine learning or image processing. Popularized in this context by~\cite{Cuturi.13}, Sinkhorn's algorithm computes the Schr\"odiger potentials~$(f_{\eps},g_{\eps})$ by alternating projections. From a computational point of view, the Monge--Kantorovich problem is significantly harder than the entropic one; see \cite{PeyreCuturi.19} for a recent survey and numerous references. It is therefore natural to investigate $(f_{\eps},g_{\eps})$ as~$\eps\to0$ to approximate Kantorovich potentials.

On the primal side, weak compactness of $\Pi(\mu,\nu)$ immediately implies that $(\pi_{\eps})$ admits cluster points as~$\eps\to0$. Moreover, any cluster point is an optimal transport, so that if uniqueness is known for the solution~$\pi_{0}$ of the limiting optimal transport problem, then $\pi_{\eps}\to\pi_{0}$. See \cite{CarlierDuvalPeyreSchmitzer.17,Leonard.12} for proofs by Gamma convergence, or~\cite{BerntonGhosalNutz.21} for a geometric proof assuming only continuity of~$c$. Our aim is to establish a comparable result on the dual side. Here, compactness is not obvious (unless $\mu,\nu$ are compactly supported). Our main result provides strong compactness in~$L^{1}$ for $(f_{\eps})$ and $(g_{\eps})$ as $\eps\to0$, and moreover, that cluster points are Kantorovich potentials. In most cases of interest, the latter are unique, so that the whole sequence converges.

\begin{theorem}\label{thm:1}
Let $(f_{\eps},g_{\eps})$ be the unique Schr\"odinger potentials for $\eps>0$.
\begin{itemize}
\item[(a)] Given $\eps_{n}\to0$, there is a subsequence $(\eps_{k})$ such that $f_{\eps_{k}}$ converges in $L^{1}(\mu)$ and $g_{\eps_{k}}$ converges in $L^{1}(\nu)$.
\item[(b)] If $\lim_{n} f_{\eps_{n}} = f$ $\mu$-a.s.\ and $\lim_{n} g_{\eps_{n}} = g$ $\nu$-a.s.\ for $\eps_{n}\to0$, then $(f,g)$ are Kantorovich potentials and the convergence also holds in $L^{1}$.
\end{itemize}
If the Kantorovich potentials $(f_{0},g_{0})$ for~\eqref{eq:dualOT} are unique, it follows that $\lim_{\eps} f_{\eps} = f_{0}$ in $L^{1}(\mu)$ and  $\lim_{\eps} g_{\eps} = g_{0}$ in $L^{1}(\nu)$.
\end{theorem}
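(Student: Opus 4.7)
My plan is to proceed in three stages: derive two-sided a priori bounds on $(f_\eps,g_\eps)$; use them to prove part~(b) via a Markov/Fatou/Scheff\'e argument; and finally attack part~(a), which I expect to be the main obstacle. The uniqueness corollary will then follow immediately.

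\emph{A priori bounds.} From the Schr\"odinger equation $e^{-f_\eps(x)/\eps}=\int e^{(g_\eps(y)-c(x,y))/\eps}\,\nu(dy)$, Jensen's inequality for the convex function $-\log$ yields the upper bound
\[
f_\eps(x) \le \bar c_\nu(x) - a_\eps,\qquad a_\eps := \int f_\eps\,d\mu = \int g_\eps\,d\nu,
\]
where $\bar c_\nu(x):=\int c(x,y)\,\nu(dy) \in L^1(\mu)$; symmetrically $g_\eps(y)\le \bar c_\mu(y) - a_\eps$. The trivial estimate $\int e^u\,d\nu\le \sup e^u$ applied to the same identity gives the pointwise lower bound $f_\eps(x)\ge g_\eps^c(x):=\inf_y(c(x,y)-g_\eps(y))$ and symmetrically $g_\eps\ge f_\eps^c$. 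Integrating the density identity $f_\eps(x)+g_\eps(y)-c(x,y)=\eps\log(d\pi_\eps/d(\mu\otimes\nu))$ against $\pi_\eps$ yields $2a_\eps=\int c\,d\pi_\eps + \eps H(\pi_\eps|\mu\otimes\nu)=I_\eps$; combined with the standard primal convergence $I_\eps\to I_0$, we have $a_\eps\to a_0:=S_0/2$.

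\emph{Part (b).} Assume $f_{\eps_n}\to f$ $\mu$-a.s.\ and $g_{\eps_n}\to g$ $\nu$-a.s. Since $\pi_\eps$ is a probability, $\int e^{(f_\eps+g_\eps-c)/\eps}\,d(\mu\otimes\nu)=1$, so Markov's inequality gives
\[
(\mu\otimes\nu)\{f_\eps+g_\eps-c\ge\delta\}\le e^{-\delta/\eps}\to 0
\]
for every $\delta>0$; combined with the a.s.\ convergence $f_{\eps_n}(x)+g_{\eps_n}(y)\to f(x)+g(y)$ on a product of full-measure sets, this forces $f\oplus g\le c$ $\mu\otimes\nu$-a.s., so $(f,g)$ is dual-feasible (modulo a standard $c$-concave regularization $\tilde f(x):=\essinf_y(c(x,y)-g(y))\ge f(x)$ needed to secure pointwise feasibility). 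Fatou applied to the nonnegative sequences $\bar c_\nu - a_\eps - f_\eps$ and $\bar c_\mu - a_\eps - g_\eps$ gives $\int f\,d\mu\ge a_0$ and $\int g\,d\nu\ge a_0$; weak duality $\int f\,d\mu+\int g\,d\nu\le S_0=2a_0$ then forces equality throughout, so $(f,g)$ are normalized Kantorovich potentials. For the $L^1$ upgrade, $\bar c_\nu - a_{\eps_n} - f_{\eps_n}\ge 0$ converges $\mu$-a.s.\ to $\bar c_\nu - a_0 - f$ with convergent integrals, so Scheff\'e's lemma delivers $f_{\eps_n}\to f$ in $L^1(\mu)$, and symmetrically $g_{\eps_n}\to g$ in $L^1(\nu)$.

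\emph{Part (a) and the corollary.} Extracting an a.s.-convergent subsequence in the absence of compactness of $\X,\Y$ is the main obstacle. The uniform upper envelope controls $f_\eps^+,g_\eps^+$, but the negative parts are entangled through the coupled Schr\"odinger equations and hinge on the tail of the other potential. My approach would be to combine primal compactness (along any $\eps_n\to 0$, pass to a subsequence with $\pi_{\eps_n}\to\pi_0$ optimal for OT) with the soft-$c$-transform structure: via Laplace's principle on the disintegration $\pi_\eps(dy|x)\propto e^{(g_\eps-c(x,\cdot))/\eps}\nu(dy)$, $f_\eps$ should behave like a $c$-conjugate of $g_\eps$ localized to the support of $\pi_\eps(\cdot\mid x)$; iterating this once more and leveraging the approximate cyclical $c$-monotonicity of $\spt\pi_\eps$ should yield uniform integrability of $(f_\eps)$ and $(g_\eps)$. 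Dunford--Pettis then produces weak $L^1$ compactness, and a diagonal a.s.-extraction reduces (a) to (b). The uniqueness corollary is immediate: if the Kantorovich potentials are unique, every subsequence admits a further $L^1$-convergent sub-subsequence with the same limit, so the full sequence converges.
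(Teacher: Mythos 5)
Your a priori bounds and the treatment of part~(b) are essentially sound and close in spirit to the paper's Step~6: the one-sided envelope $f_\eps\le \bar c_\nu - a_\eps$ (Jensen), the observation $f\oplus g\le c$ a.s.\ via Markov (the paper uses Borel--Cantelli in Lemma~\ref{lem:as}, but your Markov/Fatou variant works just as well), and the Fatou + weak duality + Scheff\'e chain to get $\int f\,d\mu=\int g\,d\nu=S_0/2$ and $L^1$ convergence. Two minor caveats there: (i)~the upgrade from a.s.\ to pointwise feasibility cannot be waved off---your $\tilde f(x)=\essinf_y(c(x,y)-g(y))$ still only gives $\tilde f(x)+g(y)\le c(x,y)$ for $\nu$-a.e.\ $y$ at each $x$, and weak duality against a singular optimal coupling $\pi_0$ then does not immediately follow; the paper's Lemma~\ref{lem:lusin} (a Lusin/semicontinuity argument exploiting continuity of $c$) is exactly what closes this; (ii)~you invoke $I_\eps\to I_0$ and $S_0=I_0$ as known, which is legitimate given the cited literature, whereas the paper re-derives both inside Step~6, but this is only a presentational difference.

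The genuine gap is in part~(a), and it is fatal as stated. Uniform integrability plus Dunford--Pettis yields weak $L^1$ compactness, but weak $L^1$ convergence does \emph{not} give an a.s.-convergent subsequence (e.g.\ $f_n(x)=\sin(nx)$ on $[0,2\pi]$ converges weakly to $0$ with no a.s.-convergent subsequence), so ``a diagonal a.s.-extraction reduces (a) to (b)'' does not follow from your plan. Some strong-compactness mechanism is indispensable. The paper supplies it by establishing an \emph{approximate equicontinuity} estimate (Lemma~\ref{lem:general}): on a set $A_\eps\subseteq\X_{\mathrm{cpt}}$ with $\mu(A_\eps)\ge1-\delta$, one has
$$
\left|f_\eps(x_1)-f_\eps(x_2)\right|\le\sup_{y\in\Y_{\mathrm{cpt}}}|c(x_1,y)-c(x_2,y)|-\eps\log(1-\delta),\quad x_1,x_2\in A_\eps,
$$
which is a quantitative version of your Laplace-principle heuristic (localizing the soft $c$-transform to where $\pi_\eps(\cdot\mid x)$ concentrates). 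This estimate is then fed into a Kirszbraun-type extension (Lemma~\ref{lem:extension}) and a generalized Arzel\`a--Ascoli theorem with vanishing slack (Lemma~\ref{lem:aa1}), and finally a double diagonal argument in $(k,n)$ to pass from convergence on each good set to $\mu$-a.s.\ convergence of the original $f_{\eps_n}$. Your cyclical-monotonicity/uniform-integrability route is not developed far enough to substitute for this, and even if it produced uniform integrability, it would still land you at weak, not strong, compactness.
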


Applications of interest for Theorem~\ref{thm:1} include costs $c(x,y)=\|x-y\|^{2}$ on~$\X=\Y=\R^{d}$ with unbounded marginal supports as in~\cite{MenaWeed.19}; here~$c$ is continuous but not uniformly continuous.
Theorem~\ref{thm:1} simplifies substantially in the case of compactly supported marginals. More generally, if~$c$ is uniformly continuous, the functions $f_{\eps},g_{\eps}$ inherit its modulus of continuity (uniformly in~$\eps$) and then uniform convergence on compact subsets along a subsequence follows from the Arzel\`a--Ascoli theorem; cf.\ Proposition~\ref{pr:unifContCase}. A result along those lines is contained in~\cite[Section~5]{GigliTamanini.21} in the particular case of quadratic cost and compact marginals. We emphasize that~\cite{GigliTamanini.21} analyzes the more complex dynamic problem of approximating $W_{2}$ geodesics with entropic interpolation; the present static setting would correspond only to its marginals at times~$t=0,1$. 
Given the results of~\cite{GigliTamanini.21}, one may conjecture that Theorem~\ref{thm:1} can be extended to interpolations and intermediate times~$t\in(0,1).$

When $\X,\Y$ are finite sets, optimal transport is a finite-dimensional linear programming problem. For such problems, a detailed convergence analysis of entropic regularization is presented in~\cite{CominettiSanMartin.94}. In particular, convergence holds even when Kantorovich potentials are not unique.

Theorem~\ref{thm:1} can be related to the large deviations principle (LDP) of~\cite{BerntonGhosalNutz.21} which describes the convergence of $(\pi_{\eps})$ on the primal side (cf.\ Section~\ref{se:LDP} for a detailed discussion). On compact spaces, convergence of potentials is equivalent to the validity of an LDP whose rate functions includes the limiting Kantorovich potentials. On the other hand, neither result implies the other in general, and we see the results and methods as complementary. Indeed, the ``easier'' inequality for the present dual approach corresponds to the more delicate one in the primal approach, and vice versa. See also \cite{ConfortiTamanini.19,Pal.19} for expansions of the entropic transport cost as $\eps\to0$, which are related to the speed of convergence of $(\pi_{\eps})$. Finally,  we mention~\cite{Berman.20}, studying the convergence of the discrete Sinkhorn algorithm to an optimal transport potential in the joint limit when $\eps_{n}\to0$ and the marginals $\mu,\nu$ are approximated by discretizations $\mu_{n},\nu_{n}$ satisfying a certain density property.
Beyond the aforementioned special cases and connections, Theorem~\ref{thm:1} is novel, to the best of our knowledge.

Two extensions of Theorem~\ref{thm:1} are obtained in the body of the text. The first one replaces~$c$ in~\eqref{eq:EOT} by a cost function $c_{\eps}$ that may depend on~$\eps$ and converges to the continuous cost~$c$ of the Monge--Kantorovich problem as~$\eps\to0$. This extension demonstrates the stability of the convergence in Theorem~\ref{thm:1}. In addition, it may be a natural result from the perspective of Sch\"odinger bridges (see~\cite{Leonard.14}): the corresponding reference measures $R_{\eps}$ in~\eqref{eq:bridgeIntro} are those with large deviations rate~$c$. The second extension replaces the two marginals $(\mu,\nu)$ by any (finite) number of marginals. The resulting ``multimarginal'' optimal transport problem has become a focus of attention as the primary tool to analyze Wasserstein barycenters in the sense of~\cite{AguehCarlier.11}. 
Its entropic regularization again admits a version of Sinkhorn's algorithm; see~\cite{Carlier.21} for a very recent analysis showing linear convergence and further references.
The techniques developed in the proof of Theorem~\ref{thm:1} are quite versatile and extend to the multimarginal setting without effort.

The remainder of this paper is organized as follows. Section~\ref{sec:auxiliaryResults} collects auxiliary results for the proof of Theorem~\ref{thm:1}, which is carried out in Section~\ref{se:ProofOfMainRes} and followed by the specialization to uniformly continuous costs. The relation with the LDP is the subject of Section~\ref{se:LDP}. In Section~\ref{se:varyingCost} we present the extension to costs~$c_{\eps}$ that vary with~$\eps$, and Section~\ref{sec:multi} concludes with the multimarginal case.

\section{Auxiliary Results}\label{sec:auxiliaryResults}

In this section we collect a number of auxiliary results for the proof of Theorem~\ref{thm:1}. Anticipating the generalization in Section~\ref{se:varyingCost}, we remark that the statements and proofs in this section hold for any measurable (but not necessarily continuous) cost function $c:\X\times\Y\to\R_{+}$ that is integrable in the sense of~\eqref{eq:cIntegrable}; further regularity is only required in Lemma~\ref{lem:lusin}, where the condition is stated explicitly.

Let $\eps>0$. We recall the Schr\"odinger potentials $f_{\eps}\in L^{1}(\mu)$ and $g_{\eps}\in L^{1}(\nu)$ from the Introduction and in particular the  normalization 
\begin{equation}\label{eq:normalization}
\int f_\epsilon(x)\,\mu(dx)=\int g_\epsilon(y)\,\nu(dy)=S_\epsilon/2\ge 0.
\end{equation}
The fact that $\pi_{\eps}$ of~\eqref{eq:densityFormIntro} is a probability measure with  marginals~$\mu$ and~$\nu$ implies
\begin{align}\label{eq:one}
\int  e^{\frac{f_\epsilon(x)+g_\epsilon(y)-c(x,y)}{\epsilon}}\, \nu(dy)=1\quad \mu\mbox{-a.s.}, \quad \int  e^{\frac{f_\epsilon(x)+g_\epsilon(y)-c(x,y)}{\epsilon}}\, \mu(dx)=1 \quad \nu\mbox{-a.s.}
\end{align}
and hence the Schr\"odinger equations
\begin{align}
\begin{split}\label{eq:definition}
f_\epsilon(x)&= -\epsilon\log \int e^{\frac{g_\epsilon(y)-c(x,y)}{\epsilon}}\, \nu(dy) \quad \mu\mbox{-a.s.},\\
g_\epsilon(y) &= -\epsilon\log \int e^{\frac{f_\epsilon(x)-c(x,y)}{\epsilon}} \, \mu(dx) \quad \nu\mbox{-a.s.}
\end{split}
\end{align}
By choosing versions of $f_\eps,g_\eps$ we may and will assume that these conjugacy relations hold everywhere on~$\X\times\Y$. In particular, this provides canonical extensions of $f_\eps,g_\eps$ to the whole marginal space. The conjugacy relations can also be used to obtain a priori estimates, as has been previously exploited in~\cite{Carlier.21,DiMarinoGerolin.20}, among others.

\begin{lemma}\label{lem:1}
For all $x\in \X$ and $y \in \Y$, we have
\begin{align*}
\inf_{y\in \Y} \big[c(x,y)-g_\epsilon(y)\big] \le f_\epsilon(x)\le \int c(x,y)\, \nu(dy),\\
\inf_{x\in \X} \big[c(x,y)-f_\epsilon(x)\big]\le g_\epsilon(y)\le \int c(x,y)\, \mu(dx).
\end{align*}
\end{lemma}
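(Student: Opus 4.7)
The plan is to derive both inequalities directly from the Schrödinger equations~\eqref{eq:definition}, exploiting two elementary facts about probability measures: Jensen's inequality for the upper bounds and the trivial estimate $\int \varphi\, d\nu \le \sup_y \varphi(y)$ for the lower bounds. By the symmetric structure of~\eqref{eq:definition}, once the two inequalities for $f_\epsilon$ are established, the corresponding inequalities for $g_\epsilon$ follow by exchanging the roles of $(\X,\mu)$ and $(\Y,\nu)$.

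For the upper bound I would apply Jensen's inequality to the convex function $\exp$, obtaining
\begin{equation*}
\int e^{\frac{g_\epsilon(y)-c(x,y)}{\epsilon}}\, \nu(dy) \ge \exp\left(\frac{1}{\epsilon}\int [g_\epsilon(y)-c(x,y)]\, \nu(dy)\right).
\end{equation*}
Taking $-\epsilon\log$ of both sides (which reverses the inequality) and using~\eqref{eq:definition} on the left yields
\begin{equation*}
f_\epsilon(x) \le \int c(x,y)\,\nu(dy) - \int g_\epsilon(y)\,\nu(dy).
\end{equation*}
The normalization~\eqref{eq:normalization} gives $\int g_\epsilon\,d\nu = S_\epsilon/2 \ge 0$, so the subtracted term is nonnegative and the claimed upper bound $f_\epsilon(x) \le \int c(x,y)\,\nu(dy)$ follows.

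For the lower bound I would simply use that $\nu$ is a probability measure to bound the integral by a supremum:
\begin{equation*}
\int e^{\frac{g_\epsilon(y)-c(x,y)}{\epsilon}}\, \nu(dy) \le \sup_{y\in\Y} e^{\frac{g_\epsilon(y)-c(x,y)}{\epsilon}} = \exp\!\left(-\frac{1}{\epsilon}\inf_{y\in\Y}[c(x,y)-g_\epsilon(y)]\right).
\end{equation*}
Taking $-\epsilon\log$ and applying~\eqref{eq:definition} once more gives $f_\epsilon(x) \ge \inf_{y\in\Y}[c(x,y)-g_\epsilon(y)]$, as required. I do not foresee any real obstacle here: each of the four inequalities is a one-line application of a standard tool, with the only mildly delicate point being the appeal to the sign of $S_\epsilon$ from~\eqref{eq:normalization} to drop the term $\int g_\epsilon\,d\nu$ in the upper bound (and, symmetrically, $\int f_\epsilon\,d\mu$ in the upper bound for $g_\epsilon$).
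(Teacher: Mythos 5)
Your proposal is correct and follows essentially the same argument as the paper: Jensen's inequality for $\exp$ together with the normalization~\eqref{eq:normalization} for the upper bound, and bounding the integral by the supremum of the integrand for the lower bound. The symmetry observation that swapping $(\X,\mu)$ and $(\Y,\nu)$ gives the estimates for $g_\epsilon$ is also exactly what the paper does.
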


\begin{proof}
Using~\eqref{eq:definition}, Jensen's inequality and~\eqref{eq:normalization}, %
\begin{align}\label{eq:bounded_from_above}
\begin{split}
f_\epsilon(x)&=-\epsilon \log \int e^{\frac{g_\epsilon(y)-c(x,y)}{\epsilon}}\,\nu(dy)\\
&\le \int \left[ -g_\epsilon(y)+c(x,y)\right]\, \nu(dy) \le \int c(x,y)\, \nu(dy),
\end{split}
\end{align}
which is the upper bound. For the lower bound we note that by~\eqref{eq:definition},
\begin{align*}
f_\epsilon(x)%
&\ge -\epsilon \log \int e^{\frac{\sup_{y\in \Y} [g_\epsilon(y)-c(x,y)]}{\epsilon}} \, \nu(dy)\\
&= -\sup_{y\in \Y} \big[g_\epsilon(y)-c(x,y)\big]=\inf_{y\in \Y} \big[c(x,y)-g_\epsilon(y)\big].
\end{align*}
The proof for $g_\epsilon$ is symmetric.
\end{proof}

Let $(M,d)$ be a metric space. A function $\omega:\R_{+}\to\R_{+}$ is a modulus of continuity if it is continuous at~$0$ with $\omega(0)=0$. More generally, we call $\omega:M\times \R_{+}\to\R_{+}$ a modulus of continuity if $\omega(x,\cdot)$ has those properties for each $x\in M$. A function $F: M\to\R$ is $\omega$-continuous if it admits the modulus of continuity $\omega(x,\cdot)$ at $x\in M$; that is, $|F(x)-F(x')|\le \omega(x,d(x,x'))$ for all $x,x'\in M$. To avoid ambiguity, we say that $F$ is uniformly $\omega$-continuous if $\omega$ can be chosen independent of~$x$. The following generalization of the Arzel\`a--Ascoli theorem will be used to construct  limits of $f_{\eps}$ and $g_{\eps}$.

\begin{lemma}\label{lem:aa1}
Let $(M,d)$ be a separable metric space and let $(F_n)$ be (arbitrary) functions on $M$ which are pointwise bounded and satisfy
\begin{align}\label{eq:equicontinuity}
|F_n(x_1)-F_n(x_2)|\le \omega(x_1,d(x_1,x_2))+h_n, \quad x_1, x_2\in M
\end{align}
for some modulus of continuity~$\omega:M\times \R_{+}\to\R_{+}$ and a sequence $h_{n}\to0$ of constants. Then after passing to a subsequence, $(F_n)$ converges uniformly on compact subsets to a $\omega$-continuous function $F: M\to\R$.
\end{lemma}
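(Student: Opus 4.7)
The plan is to mimic the classical proof of Arzelà--Ascoli, with two modifications to accommodate (i) the extra error term $h_n$ in the equicontinuity hypothesis and (ii) the fact that the modulus $\omega(x,\cdot)$ depends on the base point~$x$.

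First, I would fix a countable dense subset $D=\{y_k\}_{k\in\N}$ of $M$ (which exists by separability). Since $(F_n)$ is pointwise bounded, a standard Cantor diagonal extraction yields a subsequence (still written $(F_n)$) such that $F_n(y_k)$ converges for every $k\in\N$. To promote this to pointwise convergence on all of~$M$, I would fix $x\in M$ and argue that $(F_n(x))$ is Cauchy: for any $\delta>0$, using $\omega(x,d(x,y_k))\to 0$ as $y_k\to x$, pick $y_k$ with $\omega(x,d(x,y_k))<\delta/4$, and then use
\[
|F_n(x)-F_m(x)|\le |F_n(x)-F_n(y_k)|+|F_n(y_k)-F_m(y_k)|+|F_m(y_k)-F_m(x)|
\le 2\omega(x,d(x,y_k))+h_n+h_m+|F_n(y_k)-F_m(y_k)|
\]
to conclude that $(F_n(x))$ is Cauchy by taking $n,m$ large enough. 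Define $F(x):=\lim_n F_n(x)$.

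Next, $\omega$-continuity of $F$ is essentially immediate: letting $n\to\infty$ in $|F_n(x_1)-F_n(x_2)|\le\omega(x_1,d(x_1,x_2))+h_n$ and using $h_n\to 0$ gives $|F(x_1)-F(x_2)|\le\omega(x_1,d(x_1,x_2))$.

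Finally, for uniform convergence on a compact set $K\subset M$ and a given $\delta>0$, I would exploit continuity of $\omega(x,\cdot)$ at $0$ to choose, for each $x\in K$, a radius $r(x)>0$ with $\omega(x,s)<\delta/4$ for all $s<r(x)$; compactness of $K$ yields a finite subcover by balls $B(x_i,r(x_i))$, $i=1,\dots,m$. On each center $x_i$ we have $F_n(x_i)\to F(x_i)$, so there is $N$ such that $|F_n(x_i)-F(x_i)|<\delta/4$ and $h_n<\delta/4$ for all $n\ge N$ and all $i$. For any $y\in K$ and the corresponding $x_i$ with $d(y,x_i)<r(x_i)$, the triangle inequality combined with the hypothesis and the $\omega$-continuity of~$F$ gives $|F_n(y)-F(y)|<\delta$, which is the desired uniform convergence. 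The only potential obstacle is that $\omega$ is not required to be monotone or uniform over $x$, but this is handled by choosing the cover using $r(x)$ such that $\omega(x,\cdot)<\delta/4$ on $[0,r(x))$, which is available from continuity at $0$ point by point.
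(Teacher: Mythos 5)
Your proof is correct and follows essentially the same route as the paper's: a Cantor diagonal extraction on a countable dense set, followed by a cover-of-$K$ argument that exploits the equicontinuity-with-error hypothesis and the pointwise definition of $\omega(x,\cdot)$ at each center of the cover. The only organizational difference is that you first promote convergence from $D$ to pointwise convergence on all of $M$ and establish $\omega$-continuity of $F$ before proving local uniform convergence, whereas the paper proves a uniform Cauchy property on each compact $K$ directly from convergence on $D$ (obtaining $F$ as the uniform limit) and then passes to the limit in \eqref{eq:equicontinuity} to get $\omega$-continuity; both routes handle the extra $h_n$ term and the $x$-dependence of $\omega$ in the same way.
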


\begin{proof}
Let $D\subset M$ be a countable dense set, fix $\delta>0$ and choose $n_0\in \N$ such that $h_n\le \delta/6$ for all $n\ge n_0$. As $(F_n)$ is pointwise bounded, a diagonal argument yields a subsequence, still denoted $(F_n)$, converging pointwise on~$D$. In particular, for every $x\in D$ there exists $n(x)$ such that
\begin{align}\label{eq:aa1}
|F_n(x)-F_m(x)|\le \delta/3, \quad m,n\ge n(x).
\end{align}
For ${x_1}\in D$, \eqref{eq:equicontinuity} yields an open neighborhood $O_{x_1}$ with
\begin{align}\label{eq:aa2}
|F_n(x_1)-F_n(x_2)|\le \omega(x_1,d(x_1,x_2))+h_n\le \delta/6+\delta/6=\delta/3, \quad x_2\in O_{x_1},
\end{align}
for all $n\ge n_0$. Let $K\subset M$ be compact and $D'\subseteq D$ a finite set such that $\bigcup_{x'\in D'} O_{x'}$ covers $K$. Choose $n_{1}:= \max_{x'\in D'} n(x')\vee n_0$, then as any $x\in K$ is contained in an open neighborhood $O_{x'}$ of some $x'\in D'$, we obtain from \eqref{eq:aa1} and \eqref{eq:aa2} that
\begin{align*}
|F_n(x)-F_m(x)|\le |F_n(x)-F_n(x')|+|F_n(x')-F_m(x')|+|F_m(x')-F_m(x)|\le \delta,
\end{align*}
for all $x\in K$ and $m,n\ge n_{1}$. Thus $(F_n)$ has a limit~$F$, uniformly on compacts. Passing to the limit in~\eqref{eq:equicontinuity} shows that $F$ is $\omega$-continuous.
\end{proof}

Recall that $c:\X\times\Y\to \R_{+}$ is continuous. If $\Y_{\mathrm{cpt}}\subset \Y$ is compact and $\omega(x,r):=\sup_{y \in \Y_{\mathrm{cpt}}, d(x,x')\leq r} |c(x,y)-c(x',y)|$, then $\omega$ is a modulus of continuity in the above sense. %
That motivates the following estimates.

\begin{lemma}\label{lem:general}
Fix $\delta\in (0,1)$ and $\epsilon>0$. There exist compact sets $\X_{\mathrm{cpt}}\subseteq \X, \Y_{\mathrm{cpt}}\subseteq \Y$ and measurable sets $A_{\eps}\subseteq \X_{\mathrm{cpt}}$, $B_{\eps}\subseteq \Y_{\mathrm{cpt}}$ with $\mu(A_{\eps}),\nu(B_{\eps}) \ge 1-\delta$ such that 
\begin{align*}
\left|f_\epsilon(x_1)-f_\epsilon(x_2)\right|&\le \sup_{y\in \Y_{\mathrm{cpt}}} \left|c(x_1, y)-c(x_2,y)\right| -\epsilon\log(1-\delta) \quad\mbox{for} \quad x_1,x_2\in A_{\eps},\\[.5em]
\left|g_\epsilon(y_1)-g_\epsilon(y_2)\right|&\le \sup_{x\in A_{\eps}} \left|c(x, y_1)-c(x,y_2)\right| -\epsilon\log(1-\delta)\\
&\le \sup_{x\in \X_{\mathrm{cpt}}} \left|c(x, y_1)-c(x,y_2)\right| -\epsilon\log(1-\delta)  \quad\mbox{for} \quad y_1, y_2\in B_{\eps}.
\end{align*}
\end{lemma}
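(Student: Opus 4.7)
My plan is to exploit the Schr\"odinger equation $e^{-f_\eps(x)/\eps}=\int e^{(g_\eps(y)-c(x,y))/\eps}\,\nu(dy)$ from~\eqref{eq:definition} by restricting the integral to a compact subset of~$\Y$ on which $c(\cdot,y)$ varies continuously with~$x$, and then controlling the mass lost in this restriction. The restricted integral inherits the modulus of continuity of $c$ directly, and a conditional-probability Markov argument will guarantee that on a set of large $\mu$-measure it differs from the full integral by at most a factor~$1-\delta$. This multiplicative loss will convert, via the logarithm, into the announced additive error $-\eps\log(1-\delta)$.

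\textbf{Construction of the sets.} I would first fix a small tightness parameter $\eta=\eta(\delta)$ and, using that $\mu,\nu$ are Borel probabilities on Polish spaces, invoke inner regularity to choose compact sets $\X_{\mathrm{cpt}},\Y_{\mathrm{cpt}}$ with $\mu(\X_{\mathrm{cpt}}^c),\nu(\Y_{\mathrm{cpt}}^c)\le\eta$. In view of the explicit density~\eqref{eq:densityFormIntro}, the disintegration of $\pi_\eps$ has the explicit form
\[
p_\eps(x,B):=e^{f_\eps(x)/\eps}\int_B e^{(g_\eps(y)-c(x,y))/\eps}\,\nu(dy),\qquad q_\eps(y,A):=e^{g_\eps(y)/\eps}\int_A e^{(f_\eps(x)-c(x,y))/\eps}\,\mu(dx),
\]
which are probability measures by~\eqref{eq:one}. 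Define
\[
A_\eps:=\{x\in\X_{\mathrm{cpt}}:\,p_\eps(x,\Y_{\mathrm{cpt}})\ge 1-\delta\},\qquad B_\eps:=\{y\in\Y_{\mathrm{cpt}}:\,q_\eps(y,A_\eps)\ge 1-\delta\}.
\]
Since $\int(1-p_\eps(x,\Y_{\mathrm{cpt}}))\,\mu(dx)=\pi_\eps(\X\times\Y_{\mathrm{cpt}}^c)\le\nu(\Y_{\mathrm{cpt}}^c)\le\eta$, Markov's inequality gives $\mu(A_\eps^c)\le\eta+\eta/\delta$, and by the same reasoning $\nu(B_\eps^c)\le\eta+\mu(A_\eps^c)/\delta$. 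Taking $\eta$ of order $\delta^3$ (say $\eta=\delta^3/4$) makes both $\mu(A_\eps)$ and $\nu(B_\eps)$ exceed $1-\delta$.

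\textbf{Deriving the estimates.} For $x_1,x_2\in A_\eps$, the condition $p_\eps(x_i,\Y_{\mathrm{cpt}})\ge 1-\delta$ reads
\[
(1-\delta)e^{-f_\eps(x_i)/\eps}\le\int_{\Y_{\mathrm{cpt}}}e^{(g_\eps(y)-c(x_i,y))/\eps}\,\nu(dy)\le e^{-f_\eps(x_i)/\eps},
\]
the right inequality being trivial. With $\omega:=\sup_{y\in\Y_{\mathrm{cpt}}}|c(x_1,y)-c(x_2,y)|$, the pointwise bound $e^{-c(x_1,y)/\eps}\le e^{\omega/\eps}e^{-c(x_2,y)/\eps}$ on $\Y_{\mathrm{cpt}}$ then yields $(1-\delta)e^{-f_\eps(x_1)/\eps}\le e^{\omega/\eps}e^{-f_\eps(x_2)/\eps}$; taking logarithms and swapping $x_1\leftrightarrow x_2$ gives $|f_\eps(x_1)-f_\eps(x_2)|\le \omega-\eps\log(1-\delta)$. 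The bound for $g_\eps$ follows by the symmetric argument, now with $A_\eps$ playing the role of $\Y_{\mathrm{cpt}}$ (the last inequality in the statement is then immediate from $A_\eps\subseteq\X_{\mathrm{cpt}}$). The main obstacle is the constant-balancing in the $B_\eps$ step: for the Markov bound there to give a threshold of $\delta$, one needs $\mu(A_\eps^c)$ to be substantially smaller than $\delta$, which is what forces $\eta$ to be of order $\delta^3$ rather than of order $\delta$.
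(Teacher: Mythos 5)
Your proof is correct and takes essentially the same route as the paper: restrict the Schr\"odinger equation to a compact set, use a Markov/Chebyshev argument on the conditional mass to define $A_\eps$ and then $B_\eps$ (crucially using $A_\eps$, not $\X_{\mathrm{cpt}}$, in the definition of $B_\eps$), and convert the multiplicative mass loss into an additive $-\eps\log(1-\delta)$ via the logarithm. The only differences are cosmetic: you present the bound through the explicit disintegration kernels $p_\eps,q_\eps$, you use threshold $\delta$ in both $A_\eps$ and $B_\eps$ with compact sets of mass $1-\eta$ for $\eta\sim\delta^3$, whereas the paper uses threshold $\kappa=\delta^2$ for $A_\eps$ and $\delta$ for $B_\eps$ with compacts of mass $1-\kappa^2/2$ (thereby actually getting the sharper additive error $-\eps\log(1-\kappa)$ in the $f_\eps$ estimate, which of course dominates the stated bound). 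Both parameter choices balance the cascade correctly.
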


\begin{proof}
Fix $\kappa\in (0,\delta)$, to be determined later. 
Choose compacts $\X_{\mathrm{cpt}}$ and $\Y_{\mathrm{cpt}}$ with
$\mu(\X_{\mathrm{cpt}})\ge 1-\kappa^2/2$ and $\nu(\Y_{\mathrm{cpt}})\ge 1-\kappa^2/2$,
then $\pi_{\epsilon}\in \Pi(\mu,\nu)$ implies
\begin{align}\label{eq:tightness}
\pi_{\epsilon}(\X_{\mathrm{cpt}} \times \Y_{\mathrm{cpt}})\ge 1-\kappa^2.
\end{align}
Consider the set $$A_{\eps}=\left\{x\in \X_{\mathrm{cpt}}:\ \int_{\Y_{\mathrm{cpt}}} e^{\frac{f_\epsilon(x)+g_\epsilon(y)-c(x,y)}{\epsilon}}\,\nu(dy) \ge 1-\kappa\right\};$$
we claim that its complement $A_{\eps}^c=\X\setminus A_{\eps}$ satisfies
\begin{align}\label{eq:markov}
p_\epsilon:=\mu\left(A_{\eps}^c\right)\le \kappa.
\end{align}
Indeed, \eqref{eq:one} yields
\begin{align}\label{eq:exponential}
\int_{\Y_{\mathrm{cpt}}} e^{\frac{f_\epsilon(x)+g_\epsilon(y)-c(x,y)}{\epsilon}}\,\nu(dy)\le \int e^{\frac{f_\epsilon(x)+g_\epsilon(y)-c(x,y)}{\epsilon}}\,\nu(dy)=1
\end{align}
and thus
\begin{align*}
1-\kappa^2 &\stackrel{\eqref{eq:tightness}}{\le}\pi_{\epsilon}(\X_{\mathrm{cpt}} \times \Y_{\mathrm{cpt}})= \int_{\X_{\mathrm{cpt}}}\int_{\Y_{\mathrm{cpt}}} e^{\frac{f_\epsilon(x)+g_\epsilon(y)-c(x,y)}{\epsilon}} \,\nu(dy)\mu(dx)\\
&\le \int_{A_{\eps}^c}\int_{\Y_{\mathrm{cpt}}} e^{\frac{f_\epsilon(x)+g_\epsilon(y)-c(x,y)}{\epsilon}} \,\nu(dy)\mu(dx)\\
&\quad +\int_{A_{\eps}}\int_{\Y_{\mathrm{cpt}}} e^{\frac{f_\epsilon(x)+g_\epsilon(y)-c(x,y)}{\epsilon}} \,\nu(dy)\mu(dx)\\
&\stackrel{\eqref{eq:exponential}}{\le} (1-\kappa)p_\epsilon+(1-p_\epsilon)=1-p_\epsilon\kappa,
\end{align*}
which implies~\eqref{eq:markov}. Next, we observe from the definition of $A_{\eps}$ and \eqref{eq:exponential} that for $x\in A_{\eps}$,
\begin{align}\label{eq:ineq1}
\begin{split}
-\epsilon\left( \log \int_{\Y_{\mathrm{cpt}}} e^{\frac{g_\epsilon(y)-c(x,y)} {\epsilon}}\,\nu(dy)-\log(1-\kappa)\right)&\le f_\epsilon(x)\\
&\le 
-\epsilon\log \int_{\Y_{\mathrm{cpt}}} e^{\frac{g_\epsilon(y)-c(x,y)} {\epsilon}}\,\nu(dy).
\end{split}
\end{align}
Let $x_1, x_2\in A_{\eps}$ and assume without loss of generality that $f_\epsilon(x_1)\ge f_\epsilon(x_2)$. Then
\begin{align}\label{eq:longest}
\begin{split}
\left|f_\epsilon(x_1)-f_\epsilon(x_2)\right|&\stackrel{\eqref{eq:ineq1}}{\le} \epsilon \left(\log \int_{\Y_{\mathrm{cpt}}} e^{\frac{g_\epsilon(y)-c(x_2,y)}{\epsilon}}\, \nu(dy)-\log(1-\kappa)\right)\\
&\quad -\epsilon \log \int_{\Y_{\mathrm{cpt}}} e^{\frac{g_\epsilon(y)-c(x_1,y)}{\epsilon}}\, \nu(dy) \\
&=\epsilon \log \int_{\Y_{\mathrm{cpt}}} e^{\frac{c(x_1,y)-c(x_2,y)+g_\epsilon(y)-c(x_1,y)}{\epsilon}}\, \nu(dy) -\epsilon\log(1-\kappa)\\
&\qquad-\epsilon \log \int_{\Y_{\mathrm{cpt}}} e^{\frac{g_\epsilon(y)-c(x_1,y)}{\epsilon}}\, \nu(dy)\\
&\le \epsilon \log \left( e^{\frac{\sup_{y\in \Y_{\mathrm{cpt}}} \left|c(x_1, y)-c(x_2,y)\right| }{\epsilon}} \int_{\Y_{\mathrm{cpt}}} e^{\frac{g_\epsilon(y)-c(x_1,y)}{\epsilon}}\, \nu(dy) \right) \\
&\qquad-\epsilon\log(1-\kappa)-\epsilon \log \int_{\Y_{\mathrm{cpt}}} e^{\frac{g_\epsilon(y)-c(x_1,y)}{\epsilon}}\, \nu(dy)\\
&=\sup_{y\in \Y_{\mathrm{cpt}}} \left|c(x_1, y)-c(x_2,y)\right| -\epsilon\log(1-\kappa).
\end{split}
\end{align}
This concludes the proof of the first estimate in the lemma. 

Turning to the second, note that by \eqref{eq:tightness}, \eqref{eq:markov} and the definition of $A_{\eps}$,
\begin{align}\label{eq:nextround}
\begin{split}
\pi_{\epsilon}(A_{\eps} \times \Y_{\mathrm{cpt}} )
&\ge  \pi_{\epsilon}(\X_{\mathrm{cpt}} \times \Y_{\mathrm{cpt}} )-\pi_{\epsilon}( \X\setminus A_{\eps} \times \Y_{\mathrm{cpt}} )\\
&\ge  1-\kappa^2-\int_{A_{\eps}^c} \int_{\Y_{\text{cpt}}} e^{\frac{f_\epsilon(x)+g_\epsilon(y)-c(x,y)}{\epsilon}} \,\nu(dy)\,\mu(dx)\\
&\ge  1-\kappa^2-\kappa(1-\kappa)
=1-\kappa=1-\delta^2,
\end{split}
\end{align}
where we chose $\kappa:=\delta^2$ (ensuring  $\kappa \in (0,\delta)$, in particular). Define \begin{align*}
B_{\eps}=\left\{y\in \Y_{\mathrm{cpt}}:\ \int_{A_{\eps}} e^{\frac{f_\epsilon(x)+g_\epsilon(y)-c(x,y)}{\epsilon}}\,\mu(dx) \ge 1-\delta\right\}.
\end{align*}
Arguing as for~\eqref{eq:markov} and~\eqref{eq:ineq1}, now using \eqref{eq:nextround} instead of~\eqref{eq:tightness}, we see that $\nu(B_{\eps}^{c}) \le \delta$ and that for $y\in B_{\eps}$,
\begin{align*}%
\begin{split}
-\epsilon\left( \log \int_{A_{\eps}} e^{\frac{f_\epsilon(x)-c(x,y)} {\epsilon}}\,\mu(dx)-\log(1-\delta)\right)&\le g_\epsilon(y)\\
&\le 
-\epsilon\log \int_{A_{\eps}} e^{\frac{f_\epsilon(x)-c(x,y)} {\epsilon}}\,\mu(dx).
\end{split}
\end{align*}
We conclude the proof by arguing as in~\eqref{eq:longest} but with $f_\epsilon,\kappa$ replaced by $g_\epsilon,\delta$.
\end{proof}

The following extension lemma is a variation on Kirszbraun's theorem. Recall that a pseudometric $\tilde{d}$ is defined like a metric except that $\tilde{d}(x,y)=0$ need not imply $x=y$.

\begin{lemma}\label{lem:extension}
Let $(M,\tilde{d})$ be a pseudometric space and $A\subseteq M$.
Let $F:A\to \R$ satisfy 
\begin{align}\label{eq:continuity3}
|F(x_1)-F(x_2)|\le \tilde{d}(x_1,x_2)+\gamma ,\quad x_1,x_2\in A,
\end{align}
for some $\gamma>0$. Then the function $\tilde{F}:M\to \R$ defined by
\begin{align*}
\tilde{F}(x):=\inf_{x'\in A} \left[F(x')+\tilde{d}(x,x')+\gamma\1_{\{x'\neq x\}}\right], \quad x\in M
\end{align*}
satisfies $\tilde{F}=F$ on $A$ and
\begin{align}\label{eq:extension}
|\tilde{F}(x_1)-\tilde{F}(x_2)|\le \tilde{d}(x_1,x_2)+\gamma, \quad x_1,x_2\in M.
\end{align}
\end{lemma}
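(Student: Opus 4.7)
The plan is to verify three claims in order: (i) $\tilde F$ is everywhere finite, (ii) $\tilde F = F$ on $A$, and (iii) $\tilde F$ satisfies~\eqref{eq:extension} on all of $M$. The construction is a McShane-type extension, and the only nonstandard ingredient is the indicator term $\gamma\1_{\{x'\neq x\}}$; it is tuned precisely to reconcile the otherwise conflicting requirements ``$\tilde F=F$ on $A$'' and ``slack $+\gamma$ on all of $M$''.

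For finiteness, I would fix any $x_0\in A$ and combine~\eqref{eq:continuity3} with the pseudometric triangle inequality to bound, uniformly in $x'\in A$,
\[
F(x')+\tilde d(x,x')+\gamma \;\ge\; F(x_0)-\tilde d(x_0,x),
\]
which gives a finite lower bound for the infimum; the matching upper bound is immediate from any one test point. For (ii) and $x\in A$, the admissible choice $x'=x$ yields $\tilde F(x)\le F(x)$, while every $x'\in A$ with $x'\neq x$ satisfies $F(x')+\tilde d(x,x')+\gamma \ge F(x)$ by~\eqref{eq:continuity3}, so $\tilde F(x)\ge F(x)$. Note that $x'\neq x$ does not preclude $\tilde d(x,x')=0$---this is where the pseudometric nature of $\tilde d$ shows up---but the $+\gamma$ saves the inequality.

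Claim (iii) then follows from a one-line ``swap'' argument. Given $x_1,x_2\in M$ and $\eta>0$, I would pick an $\eta$-approximate minimizer $x'\in A$ for $\tilde F(x_2)$ and use it as a (possibly suboptimal) test point in the infimum defining $\tilde F(x_1)$, obtaining
\[
\tilde F(x_1)-\tilde F(x_2)\le \tilde d(x_1,x')-\tilde d(x_2,x')+\gamma\bigl(\1_{\{x'\neq x_1\}}-\1_{\{x'\neq x_2\}}\bigr)+\eta.
\]
The triangle inequality yields $\tilde d(x_1,x')-\tilde d(x_2,x')\le \tilde d(x_1,x_2)$, and the difference of indicators is at most $1$, hence $\tilde F(x_1)-\tilde F(x_2)\le \tilde d(x_1,x_2)+\gamma+\eta$; letting $\eta\to 0$ and exchanging $x_1\leftrightarrow x_2$ gives~\eqref{eq:extension}. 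I do not expect any substantive obstacle: the only mildly subtle point is the bookkeeping with the indicator, which is precisely what prevents a spurious $-\gamma$ defect in (ii), while its crude bound by $1$ absorbs cleanly into the slack $+\gamma$ in (iii); the rest is standard manipulation of infima of affine-like functions in the Kirszbraun/McShane pattern.
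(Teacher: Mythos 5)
Your argument is correct and follows the paper's proof essentially verbatim: the test point $x'=x$ for the upper bound in $\tilde F=F$ on $A$, inequality~\eqref{eq:continuity3} for the matching lower bound, and the approximate-minimizer swap with the triangle inequality for~\eqref{eq:extension}. The only additions are the explicit finiteness check and the slightly more careful bookkeeping of the two indicators (the paper just drops $\gamma\1_{\{x'\neq x_2\}}\ge 0$ in the lower bound and bounds $\gamma\1_{\{x'\neq x_1\}}\le\gamma$ in the upper bound), which amounts to the same estimate.
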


\begin{proof}
Fix $x\in  A$. For $x\neq x'\in A$ we have by \eqref{eq:continuity3} that
\begin{align*}
F(x')+\tilde{d}(x,x')+\gamma &\ge F(x)-\tilde{d}(x,x')-\gamma+\tilde{d}(x,x')+\gamma= F(x).
\end{align*}
It follows that $\tilde{F}(x)=F(x)$, showing the first claim. Fix $\kappa>0$ and let $x_1,x_2\in M$. By the definition of $\tilde{F}(x_2)$ there exists $x'\in A$ such that
$\tilde{F}(x_2)\ge F(x')+\tilde{d}(x_2,x')-\kappa$, and now the definition of $\tilde{F}(x_1)$ yields
\begin{align*}
\tilde{F}(x_1)-\tilde{F}(x_2)&\le  F(x')+\tilde{d}(x_1,x')+\gamma-F(x')-\tilde{d}(x_2,x')+\kappa\\
&=\tilde{d}(x_1,x')-\tilde{d}(x_2,x')+\gamma+\kappa
\le \tilde{d}(x_1,x_2)+\gamma+\kappa.
\end{align*}
As $\kappa>0$ was arbitrary, \eqref{eq:extension} follows.
\end{proof}

The next two lemmas show that limits of $f_{\eps},g_{\eps}$ must be Kantorovich potentials.

\begin{lemma}\label{lem:as}
Let $\eps_{n}\to0$ and suppose that the corresponding potentials $f_{\eps_{n}},g_{\eps_{n}}$ converge a.s. Then the limits $f:=\lim_{n} f_{\eps_{n}}$ and $g:=\lim_{n} g_{\eps_{n}}$ satisfy
\begin{align*}
f(x)+g(y) \le c(x,y) \qquad \mu\otimes\nu\text{-a.s.}
\end{align*}
\end{lemma}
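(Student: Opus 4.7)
The plan is to exploit the integrability identity that comes from $\pi_\eps$ being a probability measure, namely \eqref{eq:one}. Integrating the first equation of \eqref{eq:one} against $\mu$ gives
\begin{equation*}
\int e^{\frac{f_\eps(x)+g_\eps(y)-c(x,y)}{\eps}}\,(\mu\otimes\nu)(dx,dy) \;=\; 1
\end{equation*}
for every $\eps>0$. The idea is that if $f+g-c$ were strictly positive on a set of positive $\mu\otimes\nu$-measure, then the integrand above would blow up super-exponentially along $\eps_n\to 0$, contradicting the constant value $1$.

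To make this rigorous, I would apply Fatou's lemma along the subsequence $\eps_n$. Set
\begin{equation*}
h_n(x,y) := e^{\frac{f_{\eps_n}(x)+g_{\eps_n}(y)-c(x,y)}{\eps_n}}.
\end{equation*}
By the assumed a.s.\ convergence of $f_{\eps_n}\to f$ and $g_{\eps_n}\to g$ (together with continuity of $c$), at $\mu\otimes\nu$-a.e.\ point $(x,y)$ with $f(x)+g(y)>c(x,y)$ the numerator in the exponent converges to a strictly positive number while $\eps_n\to 0$, so $h_n(x,y)\to +\infty$. At every other point, $h_n\ge 0$. Applying Fatou then gives
\begin{equation*}
\int \liminf_n h_n \, d(\mu\otimes\nu) \;\le\; \liminf_n \int h_n \, d(\mu\otimes\nu) \;=\; 1.
\end{equation*}
Since $\liminf_n h_n = +\infty$ on $E := \{f\oplus g > c\}$, the only way for the left-hand side to be finite is $\mu\otimes\nu(E)=0$, which is precisely the claimed inequality $f(x)+g(y)\le c(x,y)$ $\mu\otimes\nu$-a.s.

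There is essentially no obstacle beyond setting up the argument carefully: the only thing to check is that the pointwise limits of the exponents behave as claimed on the set $\{f+g>c\}$, which is immediate from $\eps_n\to 0$ and the a.s.\ convergence (and no integrability hypothesis on $f,g$ is needed for Fatou since $h_n\ge 0$). Note that we do not even use the full strength of \eqref{eq:one}; only the integrated identity $\int h_n\,d(\mu\otimes\nu)=1$, equivalently $\pi_{\eps_n}(\X\times\Y)=1$, is invoked.
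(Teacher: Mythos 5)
Your proof is correct and uses a genuinely different (and somewhat slicker) argument than the paper's. Both proofs rest on the same input, the integrated identity $\int \exp\bigl((f_{\eps_n}\oplus g_{\eps_n}-c)/\eps_n\bigr)\,d(\mu\otimes\nu)=1$ from \eqref{eq:one}, but the paper processes it quantitatively: it fixes $\delta>0$, passes to a subsequence so that $\sum_n e^{-\delta/\eps_n}<\infty$, bounds $(\mu\otimes\nu)\{f_{\eps_n}\oplus g_{\eps_n}-c\ge\delta\}\le e^{-\delta/\eps_n}$ via a Markov-type inequality, and invokes Borel--Cantelli before sending $\delta\to0$. You instead apply Fatou's lemma directly to the nonnegative integrands $h_n$ and observe that $\liminf_n h_n=+\infty$ on $\{f\oplus g>c\}$ up to a nullset, which forces that set to be $\mu\otimes\nu$-null since the integrals stay equal to $1$. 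The Fatou route avoids the extraction of a subsequence and the auxiliary parameter $\delta$, and is shorter; the paper's Borel--Cantelli argument has the mild advantage of producing an explicit exponential decay rate for the bad sets $A_{\delta,n}$, though that extra information is not used elsewhere. Both arguments are valid, and your observation that only the scalar identity $\pi_{\eps_n}(\X\times\Y)=1$ is needed (not the full conditional version of \eqref{eq:one}) is accurate in both cases.
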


\begin{proof}
Let $\delta>0$. Passing to a subsequence if necessary, we may assume that $\sum_{n=1}^\infty e^{-\delta/\epsilon_n}<\infty$.
Define
\begin{align*}
A_{\delta,n}=\left\{(x,y) \ : \ f_{\eps_{n}}(x)+g_{\eps_{n}}(y)-c(x,y) \ge \delta \right\},
\end{align*}
then
\begin{align*}
\begin{split}
1&\stackrel{\eqref{eq:one}}{=}\int e^{\frac{f_{\eps_{n}}(x)+g_{\eps_{n}}(y)-c(x,y)}{\epsilon_n}}\, \mu(dx)\nu(dy) \ge \int_{A_{\delta,n}} e^{\frac{f_{n}(x)+g_{n}(y)-c(x,y)}{\epsilon_n}}\, \mu(dx)\nu(dy) \\
&\ge e^{\delta/\epsilon_n} (\mu\otimes \nu)(A_{\delta,n})
\end{split}
\end{align*}
yields $(\mu\otimes\nu)(A_{\delta,n})\le e^{-\delta/\epsilon_n}$ and thus $\sum_{n} (\mu\otimes\nu)(A_{\delta,n})<\infty$.
The Borel--Cantelli lemma now shows that $(\mu\otimes\nu)(\limsup_{n} A_{\delta,n})=0$ and hence
\begin{align*}
f(x)+g(y) \le c(x,y)+\delta \qquad \mu\otimes\nu\text{-a.s.}
\end{align*}
As $\delta>0$ was arbitrary, the claim follows.
\end{proof}

\begin{lemma}\label{lem:lusin}
Let $c$ be upper semicontinuous and $f,g$ measurable functions with 
\begin{align*}
 f(x)+g(y) \le c(x,y)\quad \mu\otimes\nu\text{-a.s.}
\end{align*}
Then there exist versions $\tilde{f}=f$ $\mu$-a.s.\ and $\tilde{g}=g$ $\nu$-a.s.\ such that 
$$
  \tilde{f}(x)+\tilde{g}(y)\le c(x,y) \quad \text{for all}\quad(x,y)\in \X\times \Y.
$$
\end{lemma}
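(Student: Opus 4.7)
The plan is to upgrade the almost-sure inequality to a pointwise one via Lusin's theorem combined with the upper semicontinuity of $c$. The key observation is that on a product of compact sets where $f$ and $g$ are continuous, the function $h:=c-f-g$ is upper semicontinuous, so $\{h<0\}$ is \emph{open}; since it has $\mu\otimes\nu$-measure zero, it must then be disjoint from the topological support of the product measure.

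Concretely, I would first invoke Lusin's theorem to produce increasing compact sets $K_n\subseteq\X$ and $L_n\subseteq\Y$ with $\mu(\bigcup_n K_n)=1$, $\nu(\bigcup_n L_n)=1$, such that $f|_{K_n}$ and $g|_{L_n}$ are continuous. Passing to $\hat K_n:=\spt(\mu|_{K_n})\subseteq K_n$ and $\hat L_n:=\spt(\nu|_{L_n})\subseteq L_n$ loses no mass; moreover the $\hat K_n$ are compact and $\hat K_n\subseteq \hat K_{n+1}$, since any open set of positive $\mu|_{K_n}$-mass also has positive $\mu|_{K_{n+1}}$-mass. On the product $K_n\times L_n$ the function $h=c-f-g$ is upper semicontinuous (usc plus continuous is usc), so $U:=\{h<0\}\cap(K_n\times L_n)$ is open in $K_n\times L_n$, and by hypothesis $(\mu|_{K_n}\otimes\nu|_{L_n})(U)=0$. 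If some $(x,y)\in U\cap(\hat K_n\times\hat L_n)$ existed, I would pick a basic open rectangle $V\times W\subseteq U$ containing $(x,y)$ with $V$ open in $K_n$ and $W$ open in $L_n$; by the definition of the supports $\mu|_{K_n}(V)>0$ and $\nu|_{L_n}(W)>0$, forcing $(\mu|_{K_n}\otimes\nu|_{L_n})(U)>0$, a contradiction. Hence $f(x)+g(y)\le c(x,y)$ pointwise on $\hat K_n\times\hat L_n$.

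Finally I would set $\tilde f:=f$ on $\bigcup_n\hat K_n$ and $\tilde f:=-\infty$ elsewhere, and analogously $\tilde g$ on $\bigcup_n\hat L_n$. Then $\tilde f=f$ $\mu$-a.s.\ and $\tilde g=g$ $\nu$-a.s., and for any $(x,y)\in\X\times\Y$ the inequality $\tilde f(x)+\tilde g(y)\le c(x,y)$ holds trivially when either term equals $-\infty$; otherwise $x\in\hat K_n$ and $y\in\hat L_m$ for some $n,m$, so $(x,y)\in\hat K_N\times\hat L_N$ for $N=\max(n,m)$, where the inequality was just established. I expect the main obstacle to be precisely the measure-theoretic step that promotes ``open and null'' to ``disjoint from the product support''; everything else is bookkeeping. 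If one prefers real-valued versions, $-\infty$ can be replaced by any sufficiently negative constant on the residual null set without affecting the conclusion.
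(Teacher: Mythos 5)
Your proof is correct and follows essentially the same route as the paper's: Lusin's theorem to reduce to continuity of $f,g$ on nested compact (resp.\ closed) sets, upper semicontinuity of $c$ to make the exceptional set $\{f\oplus g>c\}$ relatively open, and the observation that a null open rectangle cannot meet the supports of the restricted measures. The paper merely separates the argument into two passes (first assuming $f,g$ continuous, then invoking Lusin), whereas you fold them together; the substance is identical.
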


\begin{proof}
  Suppose first that $f,g$ are continuous. If $f(x)+g(y) > c(x,y)$ for some $(x,y)\in \X\times \Y$, the same inequality holds on a neighborhood $B_{r}(x)\times B_{r}(y)$, which then must be $\mu\otimes\nu$-null by the assumption. That is, $(x,y)\notin \spt\mu\times\spt\nu$. In conclusion, we can set $\tilde f=f$ on $\spt\mu$ and $\tilde f=-\infty$ outside $\spt\mu$, and similarly for~$\tilde g$.
  
  In general, Lusin's theorem yields an increasing sequence of closed sets $A_{n}\subset\X$ such that $f|_{A_{n}}$ is continuous  and $\mu(A_{n}^{c})\leq 1/n$. Let $\mu_{n}=\mu|_{A_{n}}$ and $A'_{n}=\spt \mu_{n}$. Defining analogously $B'_{n}\subset \Y$, the above argument shows that $f(x)+g(y) \leq c(x,y)$ on $A'_{n}\times B'_{n}$. The same inequality then holds on the product of $\cup_{n}A'_{n}$ and $\cup_{n}B'_{n}$, and these sets have full measure. It remains to set $\tilde f=f$ on $\cup_{n}A'_{n}$ and $\tilde f=-\infty$ on the complement, and similarly for~$\tilde g$.
\end{proof}

\section{Proof of the Main Result}\label{se:ProofOfMainRes}

We can now report the proof of Theorem~\ref{thm:1}. To simplify the notation, let us agree that an index~$n$ always refers to an object associated with $\eps=\eps_{n}$; for instance, $f_{n}=f_{\eps_{n}}$ and $g_{n}=g_{\eps_{n}}$. Moreover, subsequences are not relabeled. 

Steps 1--5 below establish the a.s.\ convergence of $f_n$ and $g_{n}$ along a subsequence. The final Step~6 shows that a.s.\ convergence also implies $L^{1}$-convergence, and that limits are Kantorovich potentials. %

\begin{proof}[Proof of Theorem \ref{thm:1}]
Let $\eps_{n}\to0$. In addition, we fix a strictly decreasing sequence $\delta_{k}\to 0$ with $\delta_{k}< 1/2$.\\[-.7em]

\textit{Step 1.}
For each $k, n\in \N$, Lemma \ref{lem:general} yields sets $$A_{n}(\delta_k)\subseteq \X_{\mathrm{cpt}}(\delta_k)\subseteq \X\quad \text{and}\quad B_{n}(\delta_k)\subseteq \Y_{\mathrm{cpt}}(\delta_k)\subseteq \Y$$ such that $$\mu(A_{n}(\delta_k))\ge 1-\delta_k\quad \text{and}\quad \nu (B_{n}(\delta_k))\ge 1-\delta_k$$%
as well as
\begin{align}\label{eq:continuity}
\begin{split}
\left|f_n(x_1)-f_n(x_2)\right|&\le \sup_{y\in \Y_{\mathrm{cpt}}(\delta_k)} \left|c(x_1, y)-c(x_2,y)\right| -\epsilon_n\log(1-\delta_k)\\
&\le \sup_{y\in \Y_{\mathrm{cpt}}(\delta_k)} \left|c(x_1, y)-c(x_2,y)\right| +\epsilon_n\log(2)
\end{split}
\end{align}
for $x_1,x_2\in A_{n}(\delta_k)$ and similarly 
\begin{align*}
\left|g_n(y_1)-g_n(y_2)\right|
&\le \sup_{x\in \X_{\mathrm{cpt}}(\delta_k)} \left|c(x, y_1)-c(x,y_2)\right| +\epsilon_n\log(2)
\end{align*}
for $y_1, y_2\in B_{n}(\delta_k)$. For each $n$, we can assume that the sequences  $(\X_{\mathrm{cpt}}(\delta_k))_{k}$ and $(\Y_{\mathrm{cpt}}(\delta_k))_{k}$ are increasing, and consequently also that $(A_{n}(\delta_k))_{k}$ and $(B_{n}(\delta_k))_{k}$ are increasing.\\[-.7em]

\textit{Step 2.} Define
\begin{align*}
\tilde{d}_k(x_1,x_2):= \sup_{y\in \Y_{\mathrm{cpt}}(\delta_k)} \left|c(x_1, y)-c(x_2,y)\right|.
\end{align*}
It is elementary to verify that $\tilde{d}_k$ is a pseudometric on~$\X$.
Using~\eqref{eq:continuity} and Lemma~\ref{lem:extension} with $\gamma=\epsilon_n\log(2)$, there exists an extension $f^k_n$ satisfying $f^k_n=f_n$ on $A_{n}(\delta_{k})$ and
\begin{align}\label{eq:extension1}
\left|f^k_n(x_1)-f^k_n(x_2)\right|\le \tilde{d}_k(x_1,x_2) +\epsilon_n\log(2), \quad x_1,x_2\in \X.
\end{align}
Similarly, there exists an extension $g^k_n$ for $g_{n}$ with an analogous property.\\[-.7em]

\textit{Step 3.} We now vary $n$, while still keeping $k$ fixed, and our aim is to construct a subsequential limit $f^k=\lim_{n\to \infty} f_n^k$ $\mu$-a.s. We first argue that $(f^k_n)_{n\in \N}$ is pointwise bounded from above. Indeed, after taking another subsequence if necessary, there exists $x_0\in \spt\mu$ such that $x_0\in A_{n}(\delta_k)$ for all $n$ and $f_n(x_0)\leq \int c(x_0,y)\,\nu(dy)<\infty$; cf.\ Lemma~\ref{lem:1}. Thus $f_n(x_0)^{+}$ is bounded uniformly in~$n$. On the other hand, 
$
\int f_n^+(x)\,\mu(dx)\le \int c(x,y)\,\nu(dy)\mu(dx)<\infty,
$
and as $\int f_n(x)\,\mu(dx)\ge 0$ by~\eqref{eq:normalization}, it follows that $\int f_n^-(x)\,\mu(dx)$ is bounded. In view of~\eqref{eq:continuity}, we obtain that $f_n(x_0)^{-}$ is bounded uniformly in~$n$. This shows that $f_n(x_0)$ is bounded, and then so is $f^{k}_n(x_0)$. By~\eqref{eq:extension1}, it follows that $f^{k}_n(x)$ is bounded uniformly in~$n$, for any $x\in\X$, as claimed. %

Define
$$
  \omega_{k}(x,r) = \sup_{d(x,x')\leq r}  \tilde{d}_{k}(x,x') \equiv  \sup_{y\in \Y_{\mathrm{cpt}}(\delta_k), d(x,x')\leq r} \left|c(x, y)-c(x',y)\right|.
$$
Clearly $\tilde{d}(x_1,x_2) \le \omega(x_{1},d(x_1,x_2))$, and $\omega_{k}$ is a modulus of continuity as noted above. In particular, the conditions of Lemma~\ref{lem:aa1} are satisfied for the sequence $(f^k_n)_{n\in\N}$ with $\omega:=\omega_{k}$ and $h_n:=\epsilon_n\log(2)$. After passing to a subsequence, we thus obtain an $\omega_{k}$-continuous function $f^k$ such that $f^k_n\to f^{k}$ uniformly on compact subsets. After passing to another subsequence, we similarly obtain a limit~$g^{k}$ for~$g^k_n$.

Recall that for fixed $n$, the sets $A_{n}(\delta_k)$ are increasing in~$k$, and $\cup_{k}A_{n}(\delta_k)$ has full $\mu$-measure. As a consequence, $f^k_n=f^{k'}_n=f_n$ on $A_{n}(\delta_{k})$ for all $k'\ge k$, and a diagonal argument yields a subsequence along which $\lim_{n\to \infty} f_n^k= f^k$ $\mu$-a.s.\ for all~$k$. Similarly for $g_n^k$, and we may assume in what follows that $\lim_{n\to \infty} f_n^k= f^k$ $\mu$-a.s.\ and $\lim_{n\to \infty} g_n^k= g^k$ $\nu$-a.s.\ for all $k$.\\[-.7em]

\textit{Step 4.} In this step we show that $(f^k)$ converges $\mu$-a.s., after passing to a subsequence. Fix $\gamma>0$ and choose $k_0$ such that $\delta_{k_0}\le \gamma$. For all $k,k'\ge k_0$ and all~$n$, we have
\begin{align}\label{eq:triangle}
|f^k(x)-f^{k'}(x)| &\le  |f^k(x)-f_n^k(x)|+|f_n^k(x)-f_n^{k'}(x)|+|f_n^{k'}(x)-f^{k'}(x)|.
\end{align}
Recalling also $f^k_n=f^{k'}_n=f_n$ on $A_{n}(\delta_{k_0})$ and
$\mu( (A_{n}(\delta_{k_0}))^c)\le \delta_{k_0}\le \gamma$,
we deduce
\begin{align*}
\int [ |f^k(x)&-f^{k'}(x)|\wedge 1]\,\mu(dx)\\
& \le \int_{A_{n}(\delta_{k_0})} \Big( [|f^k(x)-f_n^k(x)|\wedge 1] 
+ [|f_n^{k'}(x)-f^{k'}(x)|\wedge 1] \Big) \,\mu(dx)+\gamma.
\end{align*}
Sending $n\to \infty$ and using the result of Step~3, dominated convergence allows us to conclude that
$\int |f^k(x)-f^{k'}(x)|\wedge 1 \,\mu(dx)\le \gamma;$
that is, $(f^k)$ is Cauchy in $\mu$-probability. In particular, there exists a limit~$f$ in $\mu$-probability, and after taking a subsequence, the limit also holds $\mu$-a.s. Similarly, $\lim_{k}g^k =g$ $\nu$-a.s. \\[-.7em]

\textit{Step 5.} Next,  we show that the potentials $f_{n},g_{n}$ converge a.s.\ to the same limits $f,g$, after taking another subsequence. Given $\gamma>0$, Step~4 implies that for a.e.\ $x\in \X$ there exists $k_{0}(x)$ such that 
$|f^{k}(x)-f(x)|\le \gamma/3$ and $\delta_k\le \gamma$ 
for~$k\geq k_{0}(x)$.
As $\lim_{n} f_{n}^{k}=f^{k}$ $\mu$-a.s., it follows for $k\geq k_{0}(x)$ and for~$n$ sufficiently large that
\begin{align*}
|f_n(x)-f(x)|&\le |f_n(x)-f_n^k(x)|+|f_n^k(x)-f^k(x)|+|f^k(x)-f(x)|\\
&\le |f_n(x)-f_n^k(x)|+|f_n^k(x)-f^k(x)|+\gamma/3\\
&\le  |f_n(x)-f_n^k(x)|+\gamma/2.
\end{align*}
Recalling that $f_n(x)=f_n^k$ on $A_{n}(\delta_k)$, we conclude
$$
\lim_{n\to \infty} \mu \left(\left\{ x:\ |f_n(x)-f(x)|\ge \gamma\right\}\right)\le\limsup_{n\to \infty} \mu\left(\left(A_{n}(\delta_k)\right)^c\right)\le \delta_k\le \gamma;
$$
that is, $f_n \to f$ in $\mu$-probability. Taking another subsequence, we have $\lim_{n} f_n =f$ $\mu$-a.s. Similarly, we obtain $\lim_{n}g_n=g$.
Lemmas~\ref{lem:as} and~\ref{lem:lusin} 
show that after modifying $f,g$ on nullsets, we have 
\begin{align}\label{eq:inequality}
 f(x)+g(y)\le c(x,y), \quad (x,y)\in \X\times\Y.
\end{align}

\vspace{.3em}
\textit{Step 6.}
Let $C_{1}(x):=\int c(x,y)\,\nu(dy)$ and $C_{2}(y):=\int c(x,y)\,\mu(dx)$. In view of Lemma~\ref{lem:1} we have
\begin{equation}\label{eq:upperBoundInt}
  f_n,f \leq C_{1} \in L^{1}(\mu), \qquad g_n,g \leq C_{2} \in L^{1}(\nu).
\end{equation}
Using also $H(\pi|\mu\otimes\nu)\ge 0$, the duality $I_{\eps}=S_{\eps}$ from the Introduction, Fatou's lemma and \eqref{eq:inequality}, we obtain
\begin{align*}
\inf_{\pi\in \Pi(\mu,\nu)} \int c(x,y)\,\pi(dx,dy)&\le \lim_{n\to \infty}\left( \inf_{\pi\in \Pi(\mu,\nu)} \int c(x,y)\,\pi(dx,dy) +\epsilon_n H(\pi |  \mu\otimes\nu)\right)\\
&=\lim_{n\to \infty} \left(\int f_n(x)\, \mu(dx)+\int g_n(y) \,\nu(dy)\right)\\
&\le \int \limsup_{n\to \infty} f_n(x)\, \mu(dx)+\int \limsup_{n\to \infty} g_n(y) \,\nu(dy)\\
&=\int f(x)\, \mu(dx)+\int g(y) \,\nu(dy)\\
&=\inf_{\pi\in \Pi(\mu,\nu)} \int [f(x) + g(y)]\,\pi(dx,dy)\\
&\le \inf_{\pi\in \Pi(\mu,\nu)} \int c(x,y)\,\pi(dx,dy).
\end{align*}
In particular, $\lim_{\eps} S_{\eps}=\int f(x)\, \mu(dx)+\int g(y) \,\nu(dy) = S_{0}$. Using again~\eqref{eq:upperBoundInt}, Fatou's lemma then also shows that 
$$S_{0}/2 = \lim_{\eps\to0} S_{\eps}/2 = \lim_{n\to\infty} \int f_n(x)\, \mu(dx) \leq \int f(x)\, \mu(dx)$$
and similarly $S_{0}/2 \leq \int g(y)\, \nu(dy)$. We conclude that $$\int f(x)\, \mu(dx)=\int g(y)\, \nu(dy)=S_{0}/2$$
and hence the separate convergence
$$
  \lim_{n\to\infty} \int f_n(x)\, \mu(dx) = \int f(x)\, \mu(dx), \quad \lim_{n\to\infty} \int g_n(x)\, \mu(dx) = \int g(x)\, \mu(dx).
$$
In view of~\eqref{eq:upperBoundInt} and the a.s.\ convergence $f_{n}\to f$, applying Scheff\'e's lemma to the nonpositive sequence $f_{n}-C_{1}$ allows us to conclude that $f_{n}\to f$ in $L^{1}(\mu)$. Similarly, $g_{n}\to g$ in $L^{1}(\nu)$.
\end{proof}

The proof of Theorem~\ref{thm:1} simplifies substantially if $c$ is uniformly continuous (and in particular if~$\X$ and~$\Y$ are compact). Moreover, the conclusion is stronger  in this case: the almost-sure convergence of $f_{n}\to f$ and $g_{n}\to  g$  can be replaced by uniform convergence on compact subsets. For the remainder of this section, let $\omega: \R_{+}\to\R_{+}$ be a modulus of continuity as defined before Lemma~\ref{lem:aa1}.

\begin{lemma}\label{le:unifCont}
Suppose that $c$ is uniformly $\omega$-continuous in both variables.
Then the potentials $f_\epsilon,g_\epsilon$ %
are uniformly $\omega$-continuous, for any $\eps>0$.
\end{lemma}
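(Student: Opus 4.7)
The plan is to exploit directly the Schrödinger equation \eqref{eq:definition} for $f_\epsilon$, bypassing the compactification and mass-truncation steps of Lemma~\ref{lem:general}. Those truncations were needed only because $c$ failed to control $c(x_1,y)-c(x_2,y)$ in $y$; the new hypothesis removes the obstacle.

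First, I would fix $x_1,x_2\in\X$ and, without loss of generality, assume $f_\epsilon(x_1)\ge f_\epsilon(x_2)$. Using \eqref{eq:definition} I would write
\begin{align*}
f_\epsilon(x_1)-f_\epsilon(x_2)
&=\epsilon\log\frac{\int e^{(g_\epsilon(y)-c(x_2,y))/\epsilon}\,\nu(dy)}{\int e^{(g_\epsilon(y)-c(x_1,y))/\epsilon}\,\nu(dy)}\\
&=\epsilon\log\frac{\int e^{(c(x_1,y)-c(x_2,y))/\epsilon}\,e^{(g_\epsilon(y)-c(x_1,y))/\epsilon}\,\nu(dy)}{\int e^{(g_\epsilon(y)-c(x_1,y))/\epsilon}\,\nu(dy)}.
\end{align*}
The key second step is to use uniform $\omega$-continuity of $c$ in the first variable: $c(x_1,y)-c(x_2,y)\le \omega(d(x_1,x_2))$ for every $y\in\Y$. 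Pulling this constant factor out of the numerator cancels the denominator and leaves
\[
f_\epsilon(x_1)-f_\epsilon(x_2)\le \omega(d(x_1,x_2)).
\]
By symmetry in $x_1,x_2$, the same bound holds in absolute value, proving that $f_\epsilon$ is uniformly $\omega$-continuous. The argument for $g_\epsilon$ is verbatim the same with the roles of the variables swapped, using uniform $\omega$-continuity of $c$ in the second variable.

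There is essentially no obstacle here: the uniform bound on $c(x_1,y)-c(x_2,y)$ holds for all $y$ simultaneously, so no restriction of the integration domain to a compact set and no factor of $-\epsilon\log(1-\kappa)$ appears. Thus the argument reduces to a one-line log-sum-exp estimate, which is the reason the author states that the proof "simplifies substantially" in this setting.
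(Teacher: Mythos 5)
Your argument is correct and coincides with the paper's own proof: both start from the conjugacy relation~\eqref{eq:definition}, insert $c(x_1,y)$ into the exponent, and pull out $\sup_{y}|c(x_1,y)-c(x_2,y)|\le\omega(d(x_1,x_2))$ so that the remaining integral cancels. Nothing further to add.
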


\begin{proof}
Let $x_1, x_2\in \X$ satisfy $f_\epsilon(x_1)\ge f_\epsilon(x_2)$. Then
\begin{align*}
| & f_\epsilon(x_1)-f_\epsilon(x_2) |\\
&=\epsilon \log \int e^{\frac{g_\epsilon(y)-c(x_2,y)}{\epsilon}}\, \nu(dy)-\epsilon \log \int e^{\frac{g_\epsilon(y)-c(x_1,y)}{\epsilon}}\, \nu(dy) \\
&=\epsilon \log \int e^{\frac{c(x_1,y)-c(x_2,y)+g_\epsilon(y)-c(x_1,y)}{\epsilon}}\, \nu(dy)  -\epsilon \log \int e^{\frac{g_\epsilon(y)-c(x_1,y)}{\epsilon}}\, \nu(dy)\\
&\le \epsilon \log \left( e^{\frac{\sup_{y\in \Y} |c(x_1,y)-c(x_2,y)|}{\epsilon}} \int e^{\frac{g_\epsilon(y)-c(x_1,y)}{\epsilon}}\, \nu(dy) \right)
-\epsilon \log \int e^{\frac{g_\epsilon(y)-c(x_1,y)}{\epsilon}}\, \nu(dy)\\
&=\sup_{y\in \Y} |c(x_1,y)-c(x_2,y)| \leq \omega(d(x_{1},x_{2})).
\end{align*}
The case $f_\epsilon(x_1)\le f_\epsilon(x_2)$ follows by symmetry and the proof for $g_\epsilon$ is analogous.
\end{proof}

\begin{proposition} \label{pr:unifContCase}
Let $c$ be uniformly $\omega$-continuous in both variables and $\epsilon_n\to0$. After passing to a subsequence, $f_{\eps_{n}}\to f$ and $g_{\eps_{n}}\to g$ uniformly on compact subsets, for some uniformly $\omega$-continuous Kantorovich potentials $f$ and $g$.
\end{proposition}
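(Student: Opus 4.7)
The plan is to reduce to a clean application of the Arzel\`a--Ascoli theorem (in the form of Lemma~\ref{lem:aa1} with $h_n \equiv 0$), exploiting that uniform $\omega$-continuity of $c$ propagates to the potentials \emph{uniformly in} $\eps$ thanks to Lemma~\ref{le:unifCont}. The three remaining ingredients are pointwise boundedness, passage to the limit, and identification of the limits as Kantorovich potentials.

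First I would invoke Lemma~\ref{le:unifCont} to assert that each $f_{\eps_n}$ and $g_{\eps_n}$ is uniformly $\omega$-continuous on $\X$ respectively $\Y$, with modulus independent of $n$. Next I would establish pointwise boundedness of $(f_{\eps_n})$ exactly along the lines of Step 3 in the proof of Theorem~\ref{thm:1}: by Lemma~\ref{lem:1} and Fubini we can fix $x_{0}\in\spt\mu$ with $f_{\eps_n}(x_{0})\le \int c(x_{0},y)\,\nu(dy) <\infty$, giving a uniform upper bound at $x_{0}$; combining $\int f_{\eps_n}^{+}\,d\mu \le \iint c\, d(\mu\otimes\nu)<\infty$ with the normalization \eqref{eq:normalization} yields a uniform upper bound on $\int f_{\eps_n}^{-}\,d\mu$, and then uniform $\omega$-continuity forces $f_{\eps_n}(x_{0})^{-}$ to be bounded as well. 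Uniform $\omega$-continuity then propagates this bound to every $x\in\X$, giving pointwise boundedness of $(f_{\eps_n})$. The same argument works for $(g_{\eps_n})$.

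With pointwise boundedness in hand and a common modulus $\omega$, Lemma~\ref{lem:aa1} (applied with $h_{n}=0$) produces a subsequence along which $f_{\eps_n}\to f$ and $g_{\eps_n}\to g$ uniformly on compact subsets of $\X$ and $\Y$, respectively; the inequality $|f(x_{1})-f(x_{2})|\le \omega(d(x_{1},x_{2}))$ and its analogue for $g$ survive passage to the limit, so $f$ and $g$ are uniformly $\omega$-continuous.

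It remains to show that $(f,g)$ are Kantorovich potentials. Since uniform-on-compacts convergence implies almost-sure convergence, Lemma~\ref{lem:as} gives $f\oplus g\le c$ $\mu\otimes\nu$-a.s., and Lemma~\ref{lem:lusin} (applicable because the continuous $c$ is upper semicontinuous) yields versions for which $f\oplus g\le c$ holds everywhere; uniform $\omega$-continuity is preserved on the support, and we may extend by the Kirszbraun-type Lemma~\ref{lem:extension} (with $\gamma\downarrow 0$) if needed. Finally, to check that $(f,g)$ is actually \emph{optimal} in \eqref{eq:dualOT} and satisfies the normalization \eqref{eq:normalizationIntro}, I would run exactly the Step~6 duality/Fatou argument from the proof of Theorem~\ref{thm:1}: the upper bounds $f_{\eps_n}\le C_{1}\in L^{1}(\mu)$ and $g_{\eps_n}\le C_{2}\in L^{1}(\nu)$ from Lemma~\ref{lem:1} combined with $S_{\eps_n}\to S_{0}$ and Fatou give $\int f\,d\mu = \int g\,d\nu = S_{0}/2$, which simultaneously yields optimality and the symmetric normalization.

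The main obstacle I anticipate is not any single step in isolation, but ensuring that the pointwise-boundedness argument indeed goes through without having to restrict to a compact $A_{n}(\delta)$ as in Lemma~\ref{lem:general}; this is precisely where uniform $\omega$-continuity (rather than the $\eps$-dependent local modulus of Lemma~\ref{lem:general}) pays off and allows us to bypass the extension lemma and the diagonal argument of Steps 2--5 altogether.
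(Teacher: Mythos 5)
Your proposal is correct and follows essentially the same route as the paper's proof: uniform $\omega$-equicontinuity from Lemma~\ref{le:unifCont}, pointwise boundedness via Lemma~\ref{lem:1} and the normalization \eqref{eq:normalization} propagated by equicontinuity, Arzel\`a--Ascoli (Lemma~\ref{lem:aa1} with $h_n=0$) for the subsequential uniform-on-compacts limit, and the Step~6 duality/Fatou argument for identifying the limits as Kantorovich potentials. The only superfluous element is the suggestion to invoke Lemma~\ref{lem:extension}; Lemma~\ref{lem:lusin} (indeed just its first paragraph, since $f,g$ are continuous) already produces versions with $f\oplus g\le c$ everywhere, so no Kirszbraun-type extension is needed.
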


\begin{proof}
The functions $(f_n)$ are $w$-equicontinuous by Lemma~\ref{le:unifCont}, hence $(f_n)$ is pointwise bounded as soon as it is bounded at one point $x\in\X$. By~Lemma \ref{lem:1}, $f_n(x)\le \int c(x,y)\,\nu(dy)<\infty$ for $\mu$-a.e.\ $x$, so that $(f_n^{+})$ is pointwise bounded. On the other hand, 
$
\int f_n^+(x)\,\mu(dx)\le \int c(x,y)\,\nu(dy)\mu(dx)<\infty,
$
and as $\int f_n(x)\,\mu(dx)\ge 0$ by~\eqref{eq:normalization}, it follows that $\int f_n^-(x)\,\mu(dx)$ is bounded. By equicontinuity, it follows that $(f_n^-)$ must be bounded at any point $x\in \spt\mu$, and then at all points. Similarly for $(g_{n})$, and now the claimed convergence to some uniformly $\omega$-continuous functions $f,g$ follows from the Arzel\`a--Ascoli theorem. To see that $f,g$ are Kantorovich potentials, we argue as in Step~6 of the proof of Theorem~\ref{thm:1}.
\end{proof}

\section{Relation to a Large Deviations Principle}\label{se:LDP}

In this section we discuss the connection between convergence of potentials (Theorem~\ref{thm:1}) and a large deviations principle (LDP) along the lines proposed in \cite[Theorem~1.1]{BerntonGhosalNutz.21}.  
Roughly speaking, the LDP describes the exponential rate of decay of $\pi_{\eps}(E)$ for a set $E$ outside the support of $\pi_{0}:=\lim_{\eps\to0} \pi_{\eps}$, whereas the convergence %
of potentials yields the exponential rate of decay of the density $\pi_{\eps}/d(\mu\otimes\nu)$ at points outside of $\{f+g =c\}$. Clearly, these statements are closely related, and as seen below, they are equivalent if $\X,\Y$ are compact. In the general case, however, neither result implies the other in an obvious way.

Throughout this section, we fix a sequence $\eps_{n}\to0$ and set $(f_{n},g_{n}):=(f_{\eps_{n}},g_{\eps_{n}})$, as in Section \ref{se:ProofOfMainRes}. Given a measurable function $I$ on $\X\times\Y$, we denote by $\essinf I$ the essential infimum wrt.\ $\mu\otimes\nu$, defined as 
$
  \essinf I = \inf \{\alpha\in\R:\, (\mu\otimes\nu)\{I < \alpha\}>0\}.
$

\begin{proposition}\label{pr:LDPfromConv}
  Suppose that $f:=\lim_{n}f_{n}$ exists in $\mu$-probability and $g:=\lim_{n}g_{n}$ exists in $\nu$-probability. Define $I(x,y):= c(x,y)-f(x)-g(y)$, then for any measurable set $E\subset \X\times\Y$,
  \begin{align}\label{eq:LDPfromConv}
    \liminf_{n\to \infty} \epsilon_n \log \pi_{\epsilon_n}(E) &\ge - \essinf_{(x,y)\in E}   I(x,y).
  \end{align}
  If the convergence of $(f_{n},g_{n})$ is a.s.\ uniform on~$E$; i.e.,
  $$
   \|(f_{n},g_{n})\1_{E}-(f,g)\1_{E}\|_{L^\infty(\mu\otimes\nu)} \to 0,
  $$
  then~$E$ also satisfies the matching bound
  \begin{align}\label{eq:LDPfromConv2}
    \limsup_{n\to \infty} \epsilon_n \log \pi_{\epsilon_n}(E) &\le - \essinf_{(x,y)\in E}   I(x,y).
  \end{align}
  If $\X,\Y$ are compact, that is the case for all sets~$E$.
\end{proposition}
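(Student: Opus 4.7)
The starting point is the explicit density
$$
  \frac{d\pi_{\epsilon_n}}{d(\mu\otimes\nu)}(x,y) = \exp\!\Big(\!-\frac{I_n(x,y)}{\epsilon_n}\Big), \qquad I_n:=c-f_n\oplus g_n,
$$
so that $\pi_{\epsilon_n}(E)=\int_{E} e^{-I_n/\epsilon_n}\,d(\mu\otimes\nu)$. By hypothesis, $I_n\to I$ in $\mu\otimes\nu$-probability. Both bounds reduce to controlling the exponent $I_n$ on a set of positive product measure inside~$E$.

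For the lower bound \eqref{eq:LDPfromConv}, I would fix $\alpha>\essinf_{E}I$ and $\delta>0$. By definition of the essential infimum, the set $F:=\{(x,y)\in E: I(x,y)<\alpha\}$ has $(\mu\otimes\nu)(F)>0$. Convergence in probability of $I_n\to I$ gives, for all $n$ large enough, a subset $F_n\subseteq F$ with $(\mu\otimes\nu)(F_n)\geq (\mu\otimes\nu)(F)/2$ on which $|I_n-I|\leq\delta$, hence $I_n\leq\alpha+\delta$. Therefore
$$
   \pi_{\epsilon_n}(E)\ \geq\ \int_{F_n} e^{-I_n/\epsilon_n}\,d(\mu\otimes\nu)\ \geq\ e^{-(\alpha+\delta)/\epsilon_n}\,\tfrac12(\mu\otimes\nu)(F).
$$
Taking $\epsilon_n\log$ and sending $n\to\infty$ kills the positive-measure constant, giving $\liminf_n \epsilon_n\log\pi_{\epsilon_n}(E)\geq -(\alpha+\delta)$. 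Letting $\delta\downarrow0$ and $\alpha\downarrow\essinf_E I$ concludes.

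For the upper bound \eqref{eq:LDPfromConv2} under the $L^\infty$-uniform convergence assumption, the essential inequality $I_n\geq I-\delta$ holds $\mu\otimes\nu$-a.e.\ on $E$ for all $n\geq n_\delta$; combined with $I\geq \essinf_{E} I$ $\mu\otimes\nu$-a.e.\ on $E$, this yields
$$
   \pi_{\epsilon_n}(E)\ =\ \int_E e^{-I_n/\epsilon_n}\,d(\mu\otimes\nu)\ \leq\ e^{-(\essinf_{E}I-\delta)/\epsilon_n}\,(\mu\otimes\nu)(E).
$$
If $(\mu\otimes\nu)(E)=0$ then $\pi_{\epsilon_n}(E)=0$ since $\pi_{\epsilon_n}\sim\mu\otimes\nu$, and the bound is trivial; otherwise applying $\epsilon_n\log$ and passing to the limit eliminates the $(\mu\otimes\nu)(E)$ factor, leaving $\limsup_n \epsilon_n\log\pi_{\epsilon_n}(E)\leq -\essinf_E I+\delta$. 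Send $\delta\downarrow0$.

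Finally, if $\X,\Y$ are compact, continuity of $c$ upgrades to uniform continuity, so Lemma~\ref{le:unifCont} makes $(f_n)$ and $(g_n)$ equicontinuous sequences of functions on compacts, pointwise bounded (by the argument of Proposition~\ref{pr:unifContCase}). By Arzel\`a--Ascoli every subsequence has a uniformly convergent further subsequence; the in-probability limits $f,g$ pin down the limit, so $f_n\to f$ and $g_n\to g$ uniformly on~$\X$ and $\Y$ respectively. Hence the $L^\infty$-uniform convergence hypothesis holds on every measurable $E\subseteq\X\times\Y$, and the upper bound applies to all such~$E$. The main subtlety is the careful bookkeeping in the lower bound: one must extract a positive-measure subset of $E$ on which $I_n$ is genuinely close to~$I$, without appealing to a.e.\ convergence which we do not assume a priori.
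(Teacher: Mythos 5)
Your argument is correct and follows essentially the same route as the paper's proof: both bounds hinge on writing $\pi_{\epsilon_n}(E)=\int_E e^{-I_n/\epsilon_n}\,d(\mu\otimes\nu)$, carving out a positive-measure subset of $E$ where $I_n$ is uniformly close to $I$ (using convergence in probability for the lower bound and the $L^\infty$ hypothesis for the upper), and noting that multiplicative constants vanish after taking $\epsilon_n\log$. The compactness endgame via Lemma~\ref{le:unifCont}, Proposition~\ref{pr:unifContCase}, and Arzel\`a--Ascoli to upgrade in-probability convergence to uniform convergence is also exactly what the paper does.
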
 

\begin{proof}
Let $E\subset \X\times\Y$ be measurable,
$
\alpha:= - \essinf_{(x,y)\in E}   I(x,y) 
$
and $\gamma>0$. By the definition of $\alpha$, the set
\begin{align*}
E^{\gamma}:= \{ (x,y)\in E:\ f(x)+g(y)-c(x,y) \ge \alpha-2\gamma\}
\end{align*}
satisfies $\beta:=(\mu \otimes \nu)(E^\gamma)/2>0$. In view of the assumed convergence of $(f_{n},g_{n})$,  there exists $n_{0}$ such that
$(\mu\otimes\nu)\{|(f_{n},g_{n})-(f,g)|>\gamma\} \leq   \beta$ for $n\geq n_{0}$, so that 
$$
  E^{\gamma}_{n}:= \{(x,y)\in E^{\gamma}:\, f_{n}(x)+g_{n}(y)-c(x,y) \ge \alpha-\gamma\}  
$$
satisfies $(\mu \otimes \nu)(E^{\gamma}_{n})\ge \beta$ for all $n\geq n_{0}$. Thus 
\begin{align*}
\pi_{\epsilon_n} (E)&\ge \pi_{\epsilon_n} (E^{\gamma}_{n})
= \int_{E^{\gamma}_{n}} e^{\frac{f_n(x)+g_n(y)-c(x,y)}{\epsilon_n}} \,\mu(dx)\,\nu(dy)
\ge \beta e^{\frac{\alpha-\gamma}{\epsilon_n}}
\end{align*}
for $n\geq n_{0}$ and then $\liminf_{n\to \infty} \epsilon_n \log \pi_{\epsilon_n} (E) \ge\alpha-\gamma$. As $\gamma>0$ was arbitrary, the lower bound~\eqref{eq:LDPfromConv} follows.
Turning to the second claim, note that
\begin{align*}
\epsilon_n \log \pi_{\epsilon_n}(E) 
&=\epsilon_n \log\left( \int_E e^{\frac{f_n(x)+g_n(y)-c(x,y)}{\epsilon_n}} \,\mu(dx)\,\nu(dy)\right) \\
&\le \esssup_{(x,y)\in E} \left( f_n(x)+g_n(y)-c(x,y)\right).
\end{align*}
If $\|(f_{n},g_{n})\1_{E}-(f,g)\1_{E}\|_{\infty}\to0$, it readily follows that
\begin{align*}
  \lim_{n\to \infty} \epsilon_n \log \pi_{\epsilon_n}(E) &\le - \essinf_{(x,y)\in E}   I(x,y),
\end{align*}
as desired. If $\X,\Y$  are compact, Proposition~\ref{pr:unifContCase} and the  assumed convergence of the potentials in probability imply that $\|(f_{n},g_{n})-(f,g)\|_{\infty}\to0$ (without taking a subsequence), so that the above applies to any measurable set~$E$. 
\end{proof} 

\begin{remark} 
  (a) In Proposition~\ref{pr:LDPfromConv} the rate is stated through an essential infimum, consistent with the fact that~$E$ can be irregular and $f,g$ are considered as determined only up to nullsets. In many situations it is known that Kantorovich potentials admit a continuous version, for instance by $c$-concavity. If moreover~$E$ is suitably regular (e.g., open and contained in $\spt \mu \times\spt \nu$), the essential infimum can be written as an infimum.
  
  (b) In the case of compactly supported marginals, an alternative proof of Proposition~\ref{pr:LDPfromConv} can be given using Bryc's inverse to Varadhan's Integral Lemma; cf.\ \cite[Theorem~4.4.2]{DemboZeitouni.10}. That proof, however, is longer than the direct argument given above. In connection with classical large deviations theory, we note that the sequence $(\pi_{\eps_{n}})$ fails to be exponentially tight whenever the marginals are not compactly supported: exponential tightness implies, in particular, that any limit $\pi_{0}$ is compactly supported, but as $\pi_{0}\in\Pi(\mu,\nu)$, the same then follows for $\mu,\nu$.

\end{remark}

If the Kantorovich potentials $(f,g)$ are unique, Theorem~\ref{thm:1} implies that the first condition in Proposition~\ref{pr:LDPfromConv} is satisfied.

Bounds similar to~\eqref{eq:LDPfromConv} and~\eqref{eq:LDPfromConv2} are stated in \cite[Theorem~1.1]{BerntonGhosalNutz.21} for open and compact sets, respectively. While weak convergence $\pi_{\eps_{n}}\to\pi_{0}$ of the couplings is assumed, it avoids any conditions on $\X,\Y$, the integrability of~$c$, or even the finiteness of the value $I_{\eps}$ in~\eqref{eq:EOT}. Such a setting does seem outside the scope of the methods used here.

In general, if convergence of potentials is not known a priori, Proposition~\ref{pr:LDPfromConv} implies non-matching bounds by maximizing or minimizing over all potentials as follows. Given a family $(I_{\lambda})$ of measurable functions, $I^{*}:=\esssup_{\lambda} I_{\lambda}$ denotes the essential supremum wrt.\ $\mu\otimes\nu$ in the sense of probability theory.\footnote{I.e., $I^{*}$ is the (a.s.\ unique) measurable function satisfying $I^{*}\geq I_{\lambda}$ a.s.\ for all~$\lambda$ and $I^{*}\leq J$ a.s.\ for any~$J$ satisfying $J\geq I_{\lambda}$ a.s.\ for all~$\lambda$. In other words, $I^{*}$ is the supremum in the lattice of measurable functions equipped with the a.s.\ order.} Similarly, $\essinf_{\lambda} I_{\lambda}$ is the essential infimum.

\begin{corollary}\label{co:LDPlower}
  Define $I^{*}:= \esssup_{f,g} I_{f,g}$, where $I_{f,g}(x,y):= c(x,y)-f(x)-g(y)$ and the supremum is taken over all Kantorovich potentials $(f,g)$. Similarly, define $I_{*}:= \essinf_{f,g} I_{f,g}$. Then
  \begin{align}\label{eq:LDPfromConvCor}
    \liminf_{n\to \infty} \epsilon_n \log \pi_{\epsilon_n}(E) &\ge - \essinf_{(x,y)\in E}   I^{*}(x,y)
  \end{align}  
  for any measurable set $E\subset \X\times\Y$. If $\X,\Y$ are compact, then also
  \begin{align}\label{eq:LDPfromConv2Cor}
    \limsup_{n\to \infty} \epsilon_n \log \pi_{\epsilon_n}(E) &\le - \essinf_{(x,y)\in E}   I_{*}(x,y).
  \end{align}
\end{corollary}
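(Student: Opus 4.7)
The plan is to combine Theorem~\ref{thm:1}(a) with Proposition~\ref{pr:LDPfromConv} via a subsequence-extraction argument, using the comparisons $I_{f,g} \le I^{*}$ and $I_{f,g} \ge I_{*}$ to replace the limit-dependent rate $I_{f,g}$ by the canonical rates $I^{*}$ and $I_{*}$. The observation that makes this work is simple: for any Kantorovich potentials $(f,g)$, the inequality $I_{f,g} \le I^{*}$ $\mu\otimes\nu$-a.s.\ implies $\essinf_{E} I_{f,g} \le \essinf_{E} I^{*}$, and dually $I_{f,g} \ge I_{*}$ gives $\essinf_{E} I_{f,g} \ge \essinf_{E} I_{*}$.

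For the lower bound \eqref{eq:LDPfromConvCor}, I would first choose a subsequence $(n_{j})$ along which $\eps_{n_{j}} \log \pi_{\eps_{n_{j}}}(E)$ converges to $\liminf_{n} \eps_{n} \log \pi_{\eps_{n}}(E)$. Applying Theorem~\ref{thm:1}(a) to the subsequence $(\eps_{n_{j}})$, I then extract a further subsequence (not relabeled) along which $f_{n_{j}} \to f$ $\mu$-a.s.\ and $g_{n_{j}} \to g$ $\nu$-a.s.\ for some Kantorovich potentials $(f,g)$. Proposition~\ref{pr:LDPfromConv} applied along this sub-subsequence yields
\begin{align*}
\liminf_{n\to\infty} \eps_{n} \log \pi_{\eps_{n}}(E) = \lim_{j\to\infty} \eps_{n_{j}} \log \pi_{\eps_{n_{j}}}(E) \ge -\essinf_{(x,y)\in E} I_{f,g}(x,y),
\end{align*}
and then $I_{f,g} \le I^{*}$ delivers \eqref{eq:LDPfromConvCor}.

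For the upper bound \eqref{eq:LDPfromConv2Cor} in the compact case, I would analogously pick a subsequence $(n_{j})$ with $\eps_{n_{j}} \log \pi_{\eps_{n_{j}}}(E) \to \limsup_{n} \eps_{n} \log \pi_{\eps_{n}}(E)$. Using the compactness of $\X,\Y$, Proposition~\ref{pr:unifContCase} produces a further subsequence along which $(f_{n_{j}}, g_{n_{j}})$ converges \emph{uniformly} on $\X\times\Y$ (in particular on $E$) to some Kantorovich potentials $(f,g)$. The second part of Proposition~\ref{pr:LDPfromConv} then gives
\begin{align*}
\limsup_{n\to\infty} \eps_{n} \log \pi_{\eps_{n}}(E) = \lim_{j\to\infty} \eps_{n_{j}} \log \pi_{\eps_{n_{j}}}(E) \le -\essinf_{(x,y)\in E} I_{f,g}(x,y),
\end{align*}
and \eqref{eq:LDPfromConv2Cor} follows from $I_{f,g} \ge I_{*}$.

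I do not expect a serious obstacle here: Proposition~\ref{pr:LDPfromConv} does all of the analytic work and Theorem~\ref{thm:1}(a) (respectively Proposition~\ref{pr:unifContCase}) ensures that every subsequence admits a further subsequence along which the potentials converge in the required mode. The only mild subtlety is to remember that the subsequential Kantorovich potential $(f,g)$ depends on the extracted subsequence and possibly on $E$, which is precisely why passing from $I_{f,g}$ to the $(f,g)$-independent quantities $I^{*}$ and $I_{*}$ is needed.
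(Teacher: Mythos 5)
Your argument is correct and is essentially the same as the paper's: pass to a subsequence realizing the $\liminf$ (resp.\ $\limsup$), extract a further subsequence via Theorem~\ref{thm:1} (resp.\ Proposition~\ref{pr:unifContCase}) along which the Schr\"odinger potentials converge to some Kantorovich potentials $(f,g)$, apply Proposition~\ref{pr:LDPfromConv}, and then pass from $I_{f,g}$ to $I^{*}$ (resp.\ $I_{*}$). The only cosmetic difference is that the paper invokes the compact case directly through the last clause of Proposition~\ref{pr:LDPfromConv} rather than re-citing Proposition~\ref{pr:unifContCase}, but that is the same mechanism.
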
 

\begin{proof}
  Passing to a subsequence, we may assume that the $\liminf$ on the left-hand side is a limit. After passing to another subsequence, Theorem~\ref{thm:1} yields that the Schr\"odinger potentials $(f_{n},g_{n})$ converge in~$L^{1}$ to some Kantorovich potentials~$(f,g)$, and then Proposition~\ref{pr:LDPfromConv} applies to $(f,g)$. As $\essinf_{(x,y)\in E} I_{f,g}(x,y) \leq \essinf_{(x,y)\in E}   I^{*}(x,y)$, the lower bound~\eqref{eq:LDPfromConvCor} follows. The proof of~\eqref{eq:LDPfromConv2Cor} is analogous.
\end{proof} 

\begin{remark} 
  The lower bound~\eqref{eq:LDPfromConvCor} is quite general, and seems to be novel. Except in the case of uniqueness for the Kantorovich potentials, no analogue is stated in~\cite{BerntonGhosalNutz.21}. On the other hand, the upper bound~\eqref{eq:LDPfromConv2Cor} is similar to the bound in~\cite[Theorem~1.1\,(a)]{BerntonGhosalNutz.21}. The latter is stated under the condition that $\pi_{\eps_{n}}$ converges but without any conditions on $\X,\Y$.
\end{remark}

The next result is a partial converse to Proposition~\ref{pr:LDPfromConv}. It suggests that if an LDP holds, then the Schr\"odinger potentials must converge (without passing to a subsequence) and the rate function must be determined by the limiting potentials. We prove this in the compact case via Varadhan's Integral Lemma, but we conjecture that the assertions remains valid in some generality.

\begin{proposition}\label{pr:LDPtoPotentialConv}
Let $\X,\Y$ be compact and suppose the assertion of the LDP \cite[Theorem~1.1]{BerntonGhosalNutz.21} holds for some function~$I:\X\times\Y\to\R_{+}$; that is, 
\begin{align}\label{eq:ldp1}
\limsup_{n\to \infty} \epsilon_n \log \pi_{\epsilon_n}(C) &\le  -\inf_{(x,y)\in C}  I(x,y) \quad\mbox{for $C\subset\X\times\Y$ compact},\\
 \liminf_{n\to \infty} \epsilon_n \log \pi_{\epsilon_n}(U) &\ge -\inf_{(x,y)\in U} I(x,y) \quad\mbox{for $U\subset \spt \mu \times\spt \nu$ open} \label{eq:ldp2}.
\end{align}
Then 
$$
 I(x,y) = c(x,y) - f(x)-g(y), \quad (x,y)\in  \spt \mu \times\spt \nu
$$
for some Kantorovich potentials $(f,g)$, and 
$$
  f=\lim_{n\to \infty} f_{n} \quad \mbox{uniformly on }\spt \mu, \qquad g=\lim_{n\to \infty} g_{n} \quad \mbox{uniformly on }\spt \nu.
$$
\end{proposition}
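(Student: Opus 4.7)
The plan is to show that from any subsequence of $(\eps_n)$ one can extract a further subsequence along which $(f_{n_k},g_{n_k})$ converges uniformly to the same pair of Kantorovich potentials, and then to deduce convergence of the full sequence from uniqueness of this limit. First, since $\X,\Y$ are compact, the continuous cost~$c$ is uniformly continuous, so Proposition~\ref{pr:unifContCase} applies: any given subsequence of $(\eps_n)$ admits a further subsequence $(\eps_{n_k})$ along which $f_{n_k}\to \tilde f$ uniformly on~$\X$ and $g_{n_k}\to\tilde g$ uniformly on~$\Y$, for some uniformly continuous Kantorovich potentials $(\tilde f,\tilde g)$. Proposition~\ref{pr:LDPfromConv} (in its compact case) then yields matching LDP bounds for this subsequence with the continuous rate $\tilde I:=c-\tilde f-\tilde g\ge 0$, valid for every measurable set.

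The central step is to identify $I$ with $\tilde I$ on $S:=\spt\mu\times\spt\nu$. Since $\limsup$ and $\liminf$ along a subsequence are controlled by those of the full sequence, $(\pi_{\eps_{n_k}})$ also satisfies the hypothesized LDP with rate~$I$. Applying both LDPs to $U_r:=B_r(x_0,y_0)\cap S$ (open in~$S$) and $C_r:=\overline{B_r(x_0,y_0)}$ (compact) for $(x_0,y_0)\in S$ yields
\[
\essinf_{U_r}\tilde I \le \inf_{U_r} I \quad\text{and}\quad \inf_{C_r} I \le \essinf_{C_r}\tilde I.
\]
As $r\to 0$, continuity of~$\tilde I$ drives both essential infima to $\tilde I(x_0,y_0)$, which, after a standard lower-semicontinuity argument using $(x_0,y_0)\in U_r\subseteq C_r$, forces $I=c-\tilde f-\tilde g$ on~$S$.

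To conclude, the value of~$I$ on~$S$ determines $\tilde f+\tilde g$ there, hence $\tilde f$ on $\spt\mu$ and $\tilde g$ on $\spt\nu$ up to an additive constant that can be added to~$\tilde f$ and subtracted from~$\tilde g$; this constant is fixed by the symmetric normalization~\eqref{eq:normalizationIntro}. Consequently every convergent subsequence produces the same limit $(f,g):=(\tilde f,\tilde g)$ on the supports, so $(f_n,g_n)$ converges along the full sequence, uniformly on $\spt\mu$ and $\spt\nu$ respectively, since each subsequential convergence was uniform on all of~$\X,\Y$. The main obstacle is the comparison step: LDP rate functions are only determined up to lower-semicontinuous regularization, so pointwise equality on~$S$ implicitly requires identifying~$I$ with its lsc envelope, which in turn must coincide with the continuous function~$\tilde I$. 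An alternative and perhaps cleaner route in the compact setting is via Varadhan's integral lemma applied to both rate functions, giving $\sup_{\X\times\Y}(\phi-I)=\sup_S(\phi+\tilde f+\tilde g-c)$ for every bounded continuous~$\phi$ and then recovering the identification of~$I$ with~$\tilde I$ on~$S$ by a Fenchel-type inversion.
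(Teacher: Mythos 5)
Your proof is correct, and it takes a genuinely different route than the paper's. Both arguments open the same way: compactness plus Proposition~\ref{pr:unifContCase} give subsequential uniform convergence $(f_{n_k},g_{n_k})\to(\tilde f,\tilde g)$ to some Kantorovich potentials, and Proposition~\ref{pr:LDPfromConv} then produces matching LDP bounds along that subsequence with rate $\tilde I:=c-\tilde f-\tilde g$. From there, the paper asserts (somewhat tersely) that $I$ ``must be of the form'' $c-\tilde f-\tilde g$ on $\spt\mu\times\spt\nu$ — implicitly invoking uniqueness of LDP rate functions — shrinks to $\X=\spt\mu$, $\Y=\spt\nu$, and then carries out the heavier step: Varadhan's Integral Lemma applied with $\phi=I$ and with a suitable local bump $\phi$ to show that $\lim_n J_n(r)=0$, hence that $f_n+g_n\to c-I$ uniformly \emph{along the full sequence}, after which the rectangular structure and normalization separate $f$ and $g$. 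You bypass Varadhan entirely: you make the rate-function identification explicit via the $U_r$, $C_r$ infimum/essential-infimum comparison, and then close with the standard subsequence--sub-subsequence trick, letting the rectangular structure and normalization pin down the limit along every sub-subsequence. This is a valid and somewhat more economical argument, and it has the merit of spelling out a step the paper glosses over.

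You also flag the right delicate point: the inequality $\inf_{C_r}I\le\essinf_{C_r}\tilde I$ only yields $I(x_0,y_0)\le\tilde I(x_0,y_0)$ in the limit $r\to0$ if $I$ is lower semicontinuous. The paper silently relies on the same fact (it is baked into the meaning of ``rate function'' in \cite{DemboZeitouni.10} and hence into the hypothesis ``the assertion of the LDP holds''), so this is not a gap in your argument so much as a hidden assumption you made visible; it would be worth a one-line remark that $I$ is lsc by the definition of an LDP rate function. One small housekeeping item: as written, $U_r=B_r(x_0,y_0)\cap S$ is open only in the subspace topology of $S$, whereas \eqref{eq:ldp2} is stated for open subsets of $\spt\mu\times\spt\nu$; either first perform the paper's reduction $\X=\spt\mu$, $\Y=\spt\nu$ and use $B_r(x_0,y_0)$ directly, or note explicitly that \eqref{eq:ldp2} is to be read in the subspace topology. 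The closing Varadhan/Fenchel alternative you sketch is fine as an aside and is close in spirit to what the paper actually does.
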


\begin{proof}
As $\X\times\Y$ is compact, $c$ is uniformly continuous and then $f_{n},g_{n}$ are uniformly equicontinuous; cf.\ Lemma~\ref{le:unifCont}. 
Fix $(x_0,y_0)\in \spt \mu \times\spt \nu$. Equicontinuity implies that given $\gamma>0$ there exists $r,n_{0}>0$ such that for all $n\geq n_{0}$,
$$
 | I(x_0,y_0)+f_n(x_0)+g_n(y_0)-c(x_0,y_0) - J_{n}(r) |\le \gamma \quad\mbox{for}
$$
$$
  J_{n}(r) := \epsilon_n \log \bigg( \int_{B_{r}(x_0,y_0)} e^{\frac{I(x,y)+f_n(x)+g_n(y)-c(x,y)}{\epsilon_n}} \,\mu(dx)\,\nu(dy)\bigg).
$$
 To show $\lim_{n\to \infty} [f_n(x_{0})+g_n(y_{0})]=c(x_{0},y_{0})-I(x_{0},y_{0})$, it therefore suffices to prove for all $r>0$ that
\begin{align}\label{eq:proofLDPtoPotentialConv}
\lim_{n\to \infty} J_{n}(r)=0.
\end{align} 

Next, we argue that~$I$ must be continuous. Indeed, after passing to a subsequence, Proposition~\ref{pr:LDPfromConv} shows that $I$ must be of the form $I=c-\tilde{f}-\tilde{g}$ on $\spt \mu \times\spt \nu$, for some (necessarily uniformly continuous) Kantorovich potentials $(\tilde{f},\tilde{g})$.
Moreover, we may assume that $\X=\spt\mu$ and $\Y=\spt\nu$, by shrinking the marginal spaces if necessary.
In brief, the LDP  \eqref{eq:ldp1}, \eqref{eq:ldp2} then holds for all closed sets~$C$ and open sets~$U$ in $\X\times\Y$ with the ``good'' rate function $I$. In this context,  Varadhan's Integral Lemma \cite[Theorem~4.3.1]{DemboZeitouni.10} states that
\begin{align}\label{eq:VaradhanLemma}
\lim_{n\to \infty}  \epsilon_n \log \left( \int e^{\frac{\phi(x,y)}{\epsilon_n}} \pi_{\epsilon_n}(dx,dy) \right)= \sup_{(x,y)\in \X\times \Y} (\phi(x,y)-I(x,y))
\end{align}
for any continuous function $\phi:\X\times\Y\to\R$ that satisfies the moment condition
\begin{align*}
\limsup_{n\to \infty}  \epsilon_n \log \left( \int e^{\frac{\gamma \phi(x,y)}{\epsilon_n}} \pi_{\epsilon_n}(dx,dy) \right)<\infty
\end{align*}
for some $\gamma>1$.
As the continuous function~$I$ is bounded on the  compact space $\X\times\Y$, this holds in particular for $\phi:=I$, for any~$\gamma>1$. Let $(x_0,y_0)\in \X\times\Y$ and $r>0$. Using ~\eqref{eq:VaradhanLemma} for $\phi=I$,
\begin{align*}
\limsup_{n\to \infty} J_{n}(r)
&\le \limsup_{n\to \infty} \epsilon_n \log \left( \int e^{\frac{I(x,y)}{\epsilon_n}} \pi_{\epsilon_n}(dx,y)\right)\\
&=\sup_{(x,y)\in \X\times \Y} (I(x,y)-I(x,y))=0.
\end{align*}
To show the converse inequality, consider a bounded continuous function $\phi$ with
$$
  \phi(x_0,y_0) = I(x_0,y_0), \qquad \phi \leq I \mbox{ on } B_{r}(x_0,y_0),\qquad \phi = -1 \mbox{ on } B^{c}_{r}(x_0,y_0).
$$
Then
\begin{align*}
\int_{B_{r}(x_0,y_0)} e^{\frac{I(x,y)+f_n(x)+g_n(y)-c(x,y)}{\epsilon_n}} \,\mu(dx)\,\nu(dy)
&\geq \int_{B_{r}(x_0,y_0)} e^{\frac{\phi(x,y)}{\epsilon_n}} \,\pi_{\eps_n}(dx,dy) \\ 
&\geq \int e^{\frac{\phi(x,y)}{\epsilon_n}} \,\pi_{\eps_n}(dx,dy) - e^{\frac{-1}{\epsilon_n}}
\end{align*}
and thus
\begin{align*}
\liminf_{n\to \infty} J_{n}(r)
&\ge \liminf_{n\to \infty} \epsilon_n \log \left( \int e^{\frac{\phi(x,y)}{\epsilon_n}} \,\pi_{\eps_n}(dx,dy) \right) \\
&=\sup_{(x,y)\in \X\times \Y} (\phi(x,y)-I(x,y))=0,
\end{align*}
where we  have used~\eqref{eq:VaradhanLemma}. This completes the proof of~\eqref{eq:proofLDPtoPotentialConv} and thus shows that $\lim_{n\to \infty} [f_n(x_{0})+g_n(y_{0})]=c(x_{0},y_{0})-I(x_{0},y_{0})$ for $(x_0,y_0)\in \spt \mu \times\spt \nu$. In view of the uniform equicontinuity, the convergence is even uniform on that set.

On the other hand, we have already shown in Proposition~\ref{pr:unifContCase} that $f_{n}\to f$ and $g_{n}\to g$ uniformly, after passing to a subsequence, for some Kantorovich potentials $f,g$. Thus $c-I=f+g$ on $\spt \mu \times\spt \nu$. It remains to argue that the original sequences $f_{n},g_{n}$ converge to $f,g$. Indeed, the rectangular form of $S:=\spt \mu \times\spt \nu$ implies that if  $f(x)+g(y)=\tilde f(x)+\tilde g(y)$ on $S$, then $\tilde f=f+a$ and $\tilde g=g-a$ for some $a\in\R$. Recalling our symmetric normalization for potentials, the claim follows.
\end{proof}

\section{Varying Costs}\label{se:varyingCost}

In this section we extend Theorem~\ref{thm:1} to cost functions that vary with $\eps$. The continuous cost $c$ will be used for the limiting Monge--Kantorovich transport problem, as before. In addition, we introduce a family of  cost functions $c_{\eps}:\X\times\Y\to\R_{+}$ for the regularized problems with~$\eps>0$. These functions are merely required to be measurable.

On the one hand, we are interested in the stability of Theorem~\ref{thm:1} with respect to the cost function. On the other hand, this section is motivated by the large deviations perspective on Schr\"odinger bridges; cf.\ \cite{Leonard.14}. Recall that 
\begin{equation}\label{eq:bridge}
   \pi_{\eps}=\argmin_{\pi\in \Pi(\mu,\nu)} H(\pi|R_{\eps}) \quad \mbox{for} \quad \frac{dR_{\eps}}{d(\mu\otimes\nu)} = \alpha_{\eps} e^{-c/\eps}
\end{equation}
where $\alpha_{\eps}$ is the normalizing constant. Theorem~\ref{thm:1} and its counterparts in Section~\ref{se:LDP} can be interpreted as consequences of the large deviations of $(R_{\eps})$ as~$\eps\to0$, whose rate is the function~$c$. More generally, this  rate function is shared by arbitrary measures $(R'_{\eps})$ with $-\eps \log \frac{dR'_{\eps}}{d(\mu\otimes\nu)} \to c$, and one may wonder if they give rise to a similar result.
This convergence is equivalent to setting $\frac{dR'_{\eps}}{d(\mu\otimes\nu)}=\alpha'_{\eps}e^{-c_{\eps}/\eps}$ for some function $c_{\eps}$ with~$c_{\eps}\to c$, and returning to the language of entropic optimal transport, it corresponds to the cost~$c_{\eps}$ under consideration. 

In what follows, we assume a common bound
\begin{equation}\label{eq:majorant}
  c_{\eps} \leq \bar c \quad\mbox{for all}\quad \eps>0
\end{equation}
for some function $\bar{c}(x,y) = \bar{c}_{1}(x)+ \bar{c}_{2}(y)$ with $\bar{c}_{1}\in L^{1}(\mu)$ and $\bar{c}_{2}\in L^{1}(\nu)$, and that
\begin{equation}\label{eq:cConv}
  c_{\eps} \to c \quad \mbox{uniformly on compact subsets as $\eps\to0$.}
\end{equation}
The modified entropic optimal transport problem then reads
\begin{equation}\label{eq:EOTvar}
  I_{\eps} :=\inf_{\pi\in\Pi(\mu,\nu)} \int_{\X\times\Y} c_{\eps}(x,y) \, \pi(dx,dy) + \eps H(\pi|\mu\otimes\nu).
\end{equation}
As before, it has a unique solution $\pi_{\eps}$, and we introduce the Schr\"odinger potentials through the formula
\begin{equation}\label{eq:densityFormVar}
\frac{d\pi_{\eps}}{d(\mu\otimes\nu)}(x,y) = \exp \left(\frac{f_{\eps}(x) +g_{\eps}(y) - c_{\eps}(x,y)}{\eps}\right)
\end{equation}
and the symmetric normalization~\eqref{eq:normalizationIntro}. The Monge--Kantorovich problem and its potentials are still based on the continuous cost $c$. While not required for the regularized problem with $\eps>0$, continuity of costs is important for $\eps=0$, including for the validity of~Theorem~\ref{thm:1} (see Example~\ref{ex:noConv}).

\begin{proposition}\label{pr:varyingCost}
  Let~\eqref{eq:majorant}, \eqref{eq:cConv} hold. Then the assertion of Theorem~\ref{thm:1} extends to the setting~\eqref{eq:EOTvar}, \eqref{eq:densityFormVar} of variable costs $(c_{\eps})$.
\end{proposition}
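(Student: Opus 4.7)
The plan is to adapt the six-step proof of Theorem~\ref{thm:1}. The auxiliary lemmas of Section~\ref{sec:auxiliaryResults} are stated for any measurable cost satisfying~\eqref{eq:cIntegrable}, and the domination $c_\eps\leq\bar c=\bar c_1\oplus\bar c_2\in L^1(\mu\otimes\nu)$ ensures that~\eqref{eq:cIntegrable} holds for $c_\eps$ uniformly in~$\eps$. In particular, Lemma~\ref{lem:1} applied with cost $c_{\eps_n}$ yields
$$f_{\eps_n}(x)\leq\int c_{\eps_n}(x,y)\,\nu(dy)\leq\bar c_1(x)+\int\bar c_2\,d\nu=:C_1(x)\in L^1(\mu),$$
and similarly $g_{\eps_n}\leq C_2\in L^1(\nu)$, providing the dominating functions required by the final Scheff\'e step.

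For Steps 1--2, Lemma~\ref{lem:general} applied with cost $c_{\eps_n}$ gives the equicontinuity bound on $A_n(\delta_k)$ in terms of $c_{\eps_n}$. Using~\eqref{eq:cConv} I would convert this to a bound in terms of the fixed cost~$c$ at the price of an additional error $2\eta_n^k$, where $\eta_n^k:=\|c_{\eps_n}-c\|_{L^\infty(\X_{\mathrm{cpt}}(\delta_k)\times\Y_{\mathrm{cpt}}(\delta_k))}\to 0$ as $n\to\infty$. Lemma~\ref{lem:aa1} then applies with the augmented term $h_n:=\eps_n\log 2+2\eta_n^k\to 0$, and Steps 3--5 proceed verbatim to produce a.s.\ subsequential limits $f_n\to f$ and $g_n\to g$. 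The inequality $f\oplus g\leq c$ a.s.\ from Lemma~\ref{lem:as} adapts with only a cosmetic change: the set $A_{\delta,n}:=\{f_n+g_n-c_{\eps_n}\geq\delta\}$ still has $(\mu\otimes\nu)$-measure at most $e^{-\delta/\eps_n}$, and pointwise convergence $c_{\eps_n}(x,y)\to c(x,y)$ (which is~\eqref{eq:cConv} applied to the singleton $\{(x,y)\}$) combined with Borel--Cantelli yields $f\oplus g\leq c$ a.s., upgraded to everywhere by Lemma~\ref{lem:lusin} since $c$ is continuous.

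The main obstacle is Step 6: with variable costs the trivial inequality $I_0\leq I_{\eps_n}$ (which in the original proof came from $\eps_n H\geq 0$) is no longer available, and I must establish $\lim_n I_{\eps_n}=I_0$ by two separate arguments. For the upper bound, for any $\pi\in\Pi(\mu,\nu)$ with $H(\pi|\mu\otimes\nu)<\infty$, dominated convergence with dominator $\bar c$ gives $\int c_{\eps_n}\,d\pi\to\int c\,d\pi$, whence $I_{\eps_n}\leq\int c_{\eps_n}\,d\pi+\eps_n H(\pi|\mu\otimes\nu)\to\int c\,d\pi$; since $(1-\delta)\pi'+\delta(\mu\otimes\nu)$ has finite entropy for any $\pi'\in\Pi(\mu,\nu)$ and $\int c\,d(\mu\otimes\nu)\leq\int\bar c\,d(\mu\otimes\nu)<\infty$, sending $\delta\to 0$ shows that the infimum over finite-entropy $\pi$ equals $I_0$, so $\limsup I_{\eps_n}\leq I_0$. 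For the lower bound, tightness of $\Pi(\mu,\nu)$ yields a subsequential weak limit $\pi_{\eps_n}\to\pi^\dagger\in\Pi(\mu,\nu)$; the identity $\int\bar c\,d\pi_n=\int\bar c_1\,d\mu+\int\bar c_2\,d\nu$ gives uniform integrability of $\bar c$ under $(\pi_n)$, so combining lower semicontinuity of $\pi\mapsto\int c\,d\pi$ under weak convergence with $|\int c\,d\pi_n-\int c_{\eps_n}\,d\pi_n|\to 0$ (which follows from uniform convergence on compacts plus the tail control from $\bar c$) yields $I_0\leq\int c\,d\pi^\dagger\leq\liminf\int c_{\eps_n}\,d\pi_n\leq\liminf I_{\eps_n}$. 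With $\lim I_{\eps_n}=I_0$ in hand, the pinching chain of the original Step 6 closes, the normalization~\eqref{eq:normalization} forces $\int f_n\,d\mu\to\int f\,d\mu=S_0/2$, and Scheff\'e applied to the nonpositive sequence $f_n-C_1$ gives $L^1(\mu)$ convergence; the argument for $g_n$ is symmetric.
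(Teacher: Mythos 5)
Your adaptation of Steps 1--5 matches the paper's: you introduce the same error term $\eta_n^k=\|c_{\eps_n}-c\|_{L^\infty(\X_{\mathrm{cpt}}(\delta_k)\times\Y_{\mathrm{cpt}}(\delta_k))}$ from~\eqref{eq:cConv} and feed $h_n:=\eps_n\log 2 + \eta_n^k$ into Lemma~\ref{lem:aa1}, and you correctly identify $C_1=\bar c_1+\|\bar c_2\|_{L^1(\nu)}$ as the new $L^1$ dominator for Scheff\'e (which also supplies the pointwise upper bound replacing $\int c(x_0,\cdot)\,d\nu$ in Step~3). Your adaptation of Lemma~\ref{lem:as} via pointwise $c_{\eps_n}\to c$ is also the right fix. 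Where you genuinely diverge from the paper is Step~6: you establish $I_0\le\liminf I_{\eps_n}$ by extracting a weakly convergent subsequence of EOT optimizers, using lower semicontinuity of $\pi\mapsto\int c\,d\pi$ together with tail control from $\bar c$ and uniform convergence on compacts to identify $\lim\int c_{\eps_n}\,d\pi_{\eps_n}$ with $\int c\,d\pi^\dagger$. The paper instead proves the stronger statement $\lim_n\inf_\pi\int c_n\,d\pi = \inf_\pi\int c\,d\pi$ directly from the elementary bound $|\inf_\pi\int c\,d\pi-\inf_\pi\int c_n\,d\pi|\le\sup_\pi\int|c-c_n|\,d\pi$, again controlled by $\bar c$-tails plus uniform convergence on compacts, without ever passing to a weak limit. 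Both routes use the same two ingredients (domination~\eqref{eq:majorant} and convergence~\eqref{eq:cConv}), but the paper's is shorter since it avoids compactness of $\Pi(\mu,\nu)$ and the LSC argument. Note also that your separate proof of $\limsup I_{\eps_n}\le I_0$ via finite-entropy approximants is unnecessary: once $I_0\le\liminf S_{\eps_n}$ is in hand, the pinching chain of the original Step~6 (via Fatou and $f\oplus g\le c$) already closes and \emph{yields} $\lim S_{\eps_n}=S_0$ as a byproduct, so no independent upper bound is required.
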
 

\begin{proof}
  We only indicate the necessary changes to the proof of Theorem~\ref{thm:1}. First of all, we recall that the auxiliary results in Section~\ref{sec:auxiliaryResults} did not require continuity. Next, we go through the steps in Section~\ref{se:ProofOfMainRes}.

\emph{Step~1.} We change~\eqref{eq:continuity} to
\begin{align*}%
\begin{split}
\left|f_n(x_1)-f_n(x_2)\right|&\le \sup_{y\in \Y_{\mathrm{cpt}}(\delta_k)} \left|c_{n}(x_1, y)-c_{n}(x_2,y)\right| -\epsilon_n\log(1-\delta_k)\\
&\le \sup_{y\in \Y_{\mathrm{cpt}}(\delta_k)} \left|c(x_1, y)-c(x_2,y)\right| +\epsilon_n\log(2) + \eta_{n,k}
\end{split}
\end{align*}
where, due to the uniform convergence of $c_{\eps}$ on the compact set $\X_{\mathrm{cpt}}(\delta_k)\times\Y_{\mathrm{cpt}}(\delta_k)$, the constant $\eta_{n,k}$ satisfies $\lim_{n}\eta_{n,k}=0$ (for fixed $k$). The subsequent display for $g_{n}$ is changed analogously.

\emph{Step~2.}  Instead of~\eqref{eq:extension1} we now have
\begin{align*}%
\left|f^k_n(x_1)-f^k_n(x_2)\right|\le \tilde{d}_k(x_1,x_2) +\epsilon_n\log(2) + \eta_{n,k}, \quad x_1,x_2\in \X.
\end{align*}

\emph{Step~3.} In the arguments for the pointwise boundedness, simply replace~$c$ by~$\bar c$. In the application of Lemma~\ref{lem:aa1}, replace $h_n:=\epsilon_n\log(2)$ by $h_{n,k}:=\epsilon_n\log(2)+ \eta_{n,k}$. Note that the dependence on~$k$ does not cause any difficulty, as $k$ is fixed and $\lim_{n}h_{n,k}=0$ holds for each~$k$.

\emph{Steps~4,5.} No changes are necessary in these steps; note that~\eqref{eq:inequality} is based solely on the limiting cost function $c$ which is still assumed to be continuous.

\emph{Step 6.}
Define $C_{1}(x):=\int \bar{c}(x,y)\,\nu(dy)=\bar{c}_{1}(x) + \|c_{2}\|_{L^{1}(\nu)}$ and similarly $C_{2}(y):=\bar{c}_{2}(y) + \|c_{1}\|_{L^{1}(\mu)}$. Then we again have~\eqref{eq:upperBoundInt}. For the subsequent display, we now need to argue that
\begin{align}\label{eq:MKlimit}
\inf_{\pi\in \Pi(\mu,\nu)} \int c(x,y)\,\pi(dx,dy)&\le \lim_{n\to \infty} \inf_{\pi\in \Pi(\mu,\nu)} \int c_{n}(x,y)\,\pi(dx,dy).
\end{align}
Indeed, given $\gamma>0$, we can find a compact set $K=K_{1}\times K_{2}\subset \X\times\Y$ with
$$
\int_{K^{c}} \bar{c}(x,y)\,\pi(dx,dy) \leq \int_{K_{1}^{c}} \bar{c}_{1}(x)\,\mu(dx) + \int_{K_{2}^{c}} \bar{c}_{2}(y)\,\nu(dy) <\gamma.
$$
As $c_{n}\to c$ uniformly on~$K$, we also have $|c-c_{n}|\leq \gamma$ on~$K$ for $n\geq n_{0}$. Thus
\begin{align*}
  \left|\inf_{\pi\in \Pi(\mu,\nu)} \int c \,d\pi  - \inf_{\pi\in \Pi(\mu,\nu)} \int c_{n} \,d\pi \right| 
  &\leq \sup_{\pi\in \Pi(\mu,\nu)} \int |c-c_{n}| \,d\pi \\
  &\leq \sup_{\pi\in \Pi(\mu,\nu)} \int_{K }|c-c_{n}| \,d\pi + \int_{K^{c}} \bar{c} \,d\pi
  \leq 2\gamma
\end{align*} 
for $n\geq n_{0}$. This implies~\eqref{eq:MKlimit}, even with equality, and the remainder of the proof holds as stated without further changes.
\end{proof}

The following simple example shows that continuity of~$c$ is important for  the validity of Theorem~\ref{thm:1}.

\begin{example}\label{ex:noConv}
  Let $\mu=\nu$ be uniform on $\X=\Y=[0,1]$ and $c(x,y)=\1_{x\neq y}$. Then the Schr\"odinger potentials are $f_{\eps}=g_{\eps}\equiv 1/2$ for all~$\eps>0$ but the (unique) Kantorovich potentials are $f_{0}=g_{0}\equiv0$.
\end{example}

To put the example in a broader context, note that the entropic optimal transport problem~\eqref{eq:EOT} with $\eps>0$ remains unchanged if the cost function is altered on a $\mu\otimes\nu$-nullset, whereas the Monge--Kantorovich problem may very well change. If~$c$ is measurable and $\hat{c}$ is a continuous function with $\hat c=c$, Theorem~\ref{thm:1} thus implies that the entropic problem~\eqref{eq:EOT} with cost~$c$ converges to the Monge--Kantorovich problem with cost~$\hat c$ for~$\eps\to0$. Example~\ref{ex:noConv} is a particular case with $c(x,y)=\1_{x\neq y}$ and $\hat c\equiv 1$. For more general cost functions, one may conjecture that~\eqref{eq:EOT} converges to some form of upper envelope of the Monge--Kantorovich problem; we leave this question for future research.

\section{Multimarginal Optimal Transport}\label{sec:multi}

Instead of two marginals $\mu$ and $\nu$, we can generalize to an arbitrary number $N\in \N$ of marginals. Consider Polish probability spaces $(\X_{i},\mu_{i})$ for $i=1,\dots,N$ and let
$$
  \boldsymbol\mu(dx_{1},\dots,dx_{N}):= \mu_1(dx_1) \otimes \cdots \otimes\mu_N(dx_N) %
$$
denote the product measure. Moreover, let $c:\X_1\times \cdots\times \X_N \to \R_+$ be continuous with
$
  \int c \,d\boldsymbol\mu <\infty.
$
The entropic optimal transport problem generalizes directly to the set $\pi\in\Pi(\mu_{1},\dots,\mu_{N})$ of couplings,
\begin{equation}\label{eq:EOTmulti}
  I_{\eps} :=\inf_{\pi\in\Pi(\mu_{1},\dots,\mu_{N})} \int c \, \pi + \eps H(\pi|\boldsymbol\mu),
\end{equation}
and has a unique solution $\pi_{\eps}$ given by
\begin{equation}\label{eq:densityFormMulti}
\frac{d\pi_{\eps}}{d\boldsymbol\mu}(x_{1},\dots,x_{N}) = \exp \left(\frac{f^{1}_{\eps}(x_{1}) + \cdots + f^{N}_{\eps}(x_{N}) - c(x_{1},\dots,x_{N})}{\eps}\right)
\end{equation}
with $f^{i}_{\eps}\in L^{1}(\mu_{i})$. For $\eps=0$, we again recover the multimarginal optimal transport problem, whose dual now reads
\begin{align}\label{eq:dualOTmulti}
S_0 :=\sup_{f^{i}\in L^1(\mu_{i}),\, \sum_{i}f^{i}(x_{i}) \leq c(x_{1},\dots,x_{N})} \, \sum_{i=1}^{N} \int f^{i}(x_{i})\, \mu_{i}(dx_{i}).
\end{align}
We again normalize all potentials symmetrically.
Extending Theorem~\ref{thm:1}, we have the following result.

\begin{theorem}\label{thm:multi}
Let $(f^{1}_{\eps},\dots,f^{N}_{\eps})$ be the unique Schr\"odinger potentials for $\eps>0$.
\begin{itemize}
\item[(a)] Given $\eps_{n}\to0$, there is a subsequence $(\eps_{k})$ such that $f^{i}_{\eps_{k}}$ converges in $L^{1}(\mu_{i})$, for all $i=1,\dots,N$.
\item[(b)] If $\lim_{n} f^{i}_{\eps_{n}} = f^{i}$ $\mu_{i}$-a.s.\ for all $i=1,\dots,N$, then $(f^{1},\dots,f^{N})$ are Kantorovich potentials and the convergence also holds in $L^{1}(\mu_{i})$.
\end{itemize}
If the Kantorovich potentials $(f^{1}_{0},\dots,f^{N}_{0})$ for~\eqref{eq:dualOTmulti} are unique, then it follows that $\lim_{\eps\to0} f^{i}_{\eps} = f^{i}_{0}$ in $L^{1}(\mu_{i})$ for  $i=1,\dots,N$.
\end{theorem}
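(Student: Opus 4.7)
The plan is to carry out the same six-step program as in the proof of Theorem~\ref{thm:1}, but with each of the $N$ potentials $f^i_\eps$ playing, in symmetric fashion, the role that $f_\eps$ and $g_\eps$ played there. The auxiliary lemmas of Section~\ref{sec:auxiliaryResults} are either already formulated on an abstract (pseudo)metric space (Lemmas~\ref{lem:aa1} and~\ref{lem:extension}) or adapt verbatim once the two-variable decomposition $(x,y)$ is replaced by $(x_i,\mathbf{x}_{-i})$ with $\mathbf{x}_{-i}:=(x_j)_{j\neq i}\in \prod_{j\neq i}\X_j$ equipped with the product measure $\boldsymbol\mu_{-i}:=\otimes_{j\neq i}\mu_j$. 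Accordingly I would rederive the Schr\"odinger equation
\begin{equation*}
f^i_\eps(x_i) = -\eps\log\int e^{(\sum_{j\neq i} f^j_\eps(x_j)-c(x_1,\dots,x_N))/\eps}\,\boldsymbol\mu_{-i}(d\mathbf{x}_{-i})
\end{equation*}
and the a priori bound $f^i_\eps(x_i)\le \int c\,d\boldsymbol\mu_{-i}$ from Jensen's inequality together with $\sum_i\int f^i_\eps\,d\mu_i=S_\eps\ge 0$, which is the multimarginal analog of Lemma~\ref{lem:1}.

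Next I would prove the multimarginal analog of Lemma~\ref{lem:general}. Pick compacts $\X^i_{\mathrm{cpt}}\subseteq \X_i$ with $\mu_i(\X^i_{\mathrm{cpt}})\ge 1-\kappa^2/N$, so that $\pi_\eps\bigl(\prod_i \X^i_{\mathrm{cpt}}\bigr)\ge 1-\kappa^2$. Iteratively define
\begin{equation*}
A^i_\eps := \Bigl\{ x_i\in \X^i_{\mathrm{cpt}} :\ \int_{A^1_\eps\times\cdots\times A^{i-1}_\eps\times\prod_{j>i}\X^j_{\mathrm{cpt}}} e^{(\sum_{j} f^j_\eps(x_j)-c)/\eps}\,\boldsymbol\mu_{-i}(d\mathbf{x}_{-i}) \ge 1-\kappa_i\Bigr\}
\end{equation*}
with a geometric sequence $\kappa_i$ chosen so that the cascade of Markov-type estimates from the proof of Lemma~\ref{lem:general} (precisely the inductive step by which $B_\eps$ is built from $A_\eps$ there) yields $\mu_i(A^i_\eps)\ge 1-\delta$. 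Splitting the log integral as in~\eqref{eq:longest} then gives
\begin{equation*}
|f^i_\eps(x_i)-f^i_\eps(x'_i)| \le \sup_{\mathbf{x}_{-i}\in \prod_{j\neq i}\X^j_{\mathrm{cpt}}} |c(x_i,\mathbf{x}_{-i})-c(x'_i,\mathbf{x}_{-i})|-\eps\log(1-\kappa_i)
\end{equation*}
for $x_i,x'_i\in A^i_\eps$, which is exactly the input needed for Steps~1--3 of the main proof.

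With these estimates, Steps~1--5 apply independently to each index $i$: introduce the pseudometric $\tilde d^i_k$ on $\X_i$ via the displayed supremum taken over $\prod_{j\neq i}\Y^j_{\mathrm{cpt}}(\delta_k)$, extend $f^i_n$ to all of $\X_i$ via Lemma~\ref{lem:extension}, invoke Lemma~\ref{lem:aa1} together with a diagonal argument to extract subsequential limits $f^{i,k}$, and then pass to $k\to\infty$ to obtain $f^i_n\to f^i$ in $\mu_i$-probability, hence $\mu_i$-a.s.\ along a subsequence. The Borel--Cantelli argument of Lemma~\ref{lem:as} applied to $\{\sum_i f^i_n-c\ge \delta\}$ yields $\sum_i f^i(x_i)\le c(x_1,\dots,x_N)$ $\boldsymbol\mu$-a.s., and Lemma~\ref{lem:lusin} extends to $N$ marginals by iterating the Lusin-type restriction in each coordinate to get pointwise admissibility of suitable versions.

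For Step~6, the uniform majorants $C_i(x_i):=\int c\,d\boldsymbol\mu_{-i}\in L^1(\mu_i)$ dominate $f^i_n$ and $f^i$ from above. Combining $I_n=S_n=\sum_i \int f^i_n\,d\mu_i$, Fatou's lemma, $\sum_i f^i\le c$ and the chain of inequalities from the two-marginal Step~6 gives $\lim_n S_n=S_0=\sum_i\int f^i\,d\mu_i$, with $(f^1,\dots,f^N)$ Kantorovich potentials. The symmetric normalization $\int f^i_n\,d\mu_i=S_n/N$ together with Fatou on each individual summand forces $\int f^i\,d\mu_i=S_0/N$, hence separate $L^1$ convergence of each integral; Scheff\'e's lemma applied to the nonpositive sequences $f^i_n-C_i$ then upgrades a.s.\ convergence to $L^1(\mu_i)$ convergence, and uniqueness of Kantorovich potentials forces convergence of the whole sequence. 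The main obstacle I anticipate is purely organizational: controlling the cascading dependence of the sets $A^i_\eps$ in the multimarginal Lemma~\ref{lem:general}, where the estimate for $f^i_\eps$ requires the \emph{other} potentials $f^j_\eps$ to concentrate simultaneously on the same compact region; the geometric choice of $\kappa_i$ handles this but requires bookkeeping.
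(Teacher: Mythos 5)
Your proposal is correct and is exactly the elaboration the paper leaves implicit: Theorem~\ref{thm:multi} is proved in the paper by the single sentence ``the arguments are exactly the same as in the proof of Theorem~\ref{thm:1}'', and you carry out that adaptation faithfully, in particular by observing that the two-stage construction $A_{\eps}\to B_{\eps}$ in Lemma~\ref{lem:general} generalizes to an $N$-stage cascade $A^1_\eps\to A^2_\eps\to\cdots\to A^N_\eps$ with a geometrically growing sequence $\kappa_i$ (e.g.\ $\kappa_i=\kappa^{2^{1-i}}$ starting from $\kappa=\delta^{2^{N-1}}$), which the iterated Markov estimate indeed supports. Steps~3--6 go through with $f$ and $g$ replaced symmetrically by $f^1,\dots,f^N$, the $L^1$ majorants $C_i=\int c\,d\boldsymbol\mu_{-i}$, and the normalization $\int f^i_\eps\,d\mu_i=S_\eps/N$, exactly as you indicate.
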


\begin{proof}
The arguments are exactly the same as in the proof of Theorem~\ref{thm:1}, and therefore omitted.
\end{proof}

\newcommand{\dummy}[1]{}

\end{document}